\documentclass[12pt]{amsart} 
\usepackage{amsfonts}
\usepackage{amsmath,amssymb,amsthm,amsthm}
\usepackage{mathtools}
\usepackage{dsfont}
\usepackage{etoolbox}

\usepackage{tikz}
\usetikzlibrary{calc,positioning}

\usepackage{todonotes}

\usepackage{esint}

\usepackage[margin=1in]{geometry}

\newtheorem{theorem}{Theorem}[section]

\newtheorem{definition}[theorem]{Definition}

\newtheorem{lemma}[theorem]{Lemma}

\newtheorem{proposition}[theorem]{Proposition}
\newtheorem{assumption}[theorem]{Assumption}
\newtheorem{remark}[theorem]{Remark}
\numberwithin{equation}{section}

\newcommand{\bR}{\mathbb{R}}

\newcommand\cD{\mathcal{D}}

\newcommand\cN{\mathcal{N}}

\newcommand\dist{\operatorname{dist}}
\newcommand{\p}{\partial}
\newcommand{\epsi}{\varepsilon}

\usepackage[margin=1in]{geometry}

\begin{document}

\title[Mixed problems]{On the $W^{2,p}$ solvability for mixed boundary value problems}
\subjclass[2020]{Primary ?????; Secondary ?????}

\author[R. Khandelwal]{Rohit Khandelwal}
\address[R. Khandelwal]{Faculty of Mathematical Sciences (FMS), South Asian University, New Delhi, India 110068}
\email{rkhandel@sau.int}

\author[Z. Li]{Zongyuan Li}
\address[Z. Li]{Department of Mathematics, City University of Hong Kong, 83 Tat Chee Avenue, Kowloon Tong, Hong Kong SAR}
\email{zongyuan.li@cityu.edu.hk}
\thanks{Z. Li was partially supported by the Hong Kong RGC grant ECS 21307225 and R. Khandelwal was partially supported by the Faculty research grant SAU/Admin/2026/234 and ANRF PMECRG ANRF/ECRG/2025/001194/PMS}

\subjclass[2020]{35J25, 35J57, 35A01}
\keywords{Mixed boundary value problem, second-order elliptic equation, Lam\'e system, $W^{2,p}$ estimate and solvability, atom Hardy space}

\begin{abstract}
We establish global $W^{2,p}$ solvability for the Poisson equation with mixed Dirichlet--Neumann boundary conditions in two classes of domains in all dimensions $n\geq 2$. For $C^{1,\alpha}$ domains with a Reifenbeg flat interface, we obtain the optimal range $1<p<4/3$, provided that $\alpha>1-1/p$.
For Lipschitz polyhedra with facewise boundary decompositions, we obtain solvability for $p$ close to $1$. In both settings, we establish endpoint $W^{2,1}$ solvability for data in an adapted atomic Hardy space.
Applications of the methods to Lam\'e systems are also discussed.
\end{abstract}

\maketitle

\section{Introduction}
\label{sec-introduction}

Let $\Omega\subset\bR^n$, $n\geq2$, be a bounded and connected domain. Its
boundary is decomposed into two disjoint, nonempty, relatively open sets
$\cD$ and $\cN$, with interface
\[
 \p\Omega=\overline{\cD}\cup\overline{\cN},
 \qquad
 \Gamma=\overline{\cD}\cap\overline{\cN}.
\]
We study the mixed Dirichlet--Neumann problem
\begin{equation}
 \left\{
 \begin{aligned}
  -\Delta u&=f &&\text{in }\Omega,\\
  u&=0 &&\text{on }\cD,\\
  \partial_\nu u&=0 &&\text{on }\cN.
 \end{aligned}
 \right.
 \tag{P}
 \label{prob:poisson}
\end{equation}
Here and below, $\nu$ denotes the unit outward normal where it is defined.
Unlike the pure Dirichlet and pure Neumann problems, a solution of
\eqref{prob:poisson} may be singular near $\Gamma$ even when the domain,
the operator, and the data are smooth. 
In particular, the classical
$r^{1/2}$ singularity at a smooth interface restricts the possible range
of global $W^{2,p}$ estimates to $p<4/3$. Thus, second-order estimates
for mixed problems naturally lead to a regime of low integrability.

In this paper, we are concerned with global $W^{2,p}$ solvability for
arbitrary right-hand sides $f\in L^p(\Omega)$, without additional
integrability assumptions on $f$. This issue requires particular
attention for $p$ close to $1$. Indeed, when $n\geq3$, such data need
not belong to the dual of the energy space, so regularity results for
energy solutions do not by themselves give solvability for every
$f\in L^p(\Omega)$. We address the following question:
\begin{center}
\textit{For every $f\in L^p(\Omega)$, does \eqref{prob:poisson} admit
a unique $W^{2,p}$ solution, with an estimate depending only on
$\|f\|_{L^p(\Omega)}$?}
\end{center}

Our first theorem gives the optimal $W^{2,p}$ solvability on regular
domains in every dimension.  Put
\[
 \delta_\Gamma(x):=\dist(x,\Gamma).
\]

\begin{theorem}
 \label{thm:poisson-smooth}
 Let $\Omega\subset\bR^n$, $n\geq2$, be a bounded, connected and
 $C^{1,\alpha}$ domain, let $p\in[1,4/3)$, and assume
 $\alpha>1-1/p$. There exists $\theta_0>0$ such that, if the interface
 $\Gamma$ is $\theta_0$-Reifenberg flat, the following assertions hold.
 \begin{enumerate}
  \item If $p=1$ and $f\in H^1_{\cD}(\Omega)$, there exists a unique
  solution $u\in W^{2,1}(\Omega)\cap W^{1,1}_{\cD}(\Omega)$ of \eqref{prob:poisson}, and
  \begin{equation}
   \|D^2u\|_{L^1(\Omega)}
   +\|\delta_\Gamma^{-1}\nabla u\|_{L^1(\Omega)}
   \leq C\|f\|_{H^1_{\cD}(\Omega)},
   \label{eq:smooth-hardy-endpoint}
  \end{equation}
  where $C=C(\Omega)$.
  \item If $1<p<4/3$ and $f\in L^p(\Omega)$, there exists a unique
  solution $u\in W^{2,p}(\Omega)\cap W^{1,p}_{\cD}(\Omega)$ of \eqref{prob:poisson},
  and
  \begin{equation}
   \|D^2u\|_{L^p(\Omega)}
   +\|\delta_\Gamma^{-1}\nabla u\|_{L^p(\Omega)}
   \leq C\|f\|_{L^p(\Omega)},
   \label{eq:smooth-Lp-estimate}
  \end{equation}
  where $C=C(\Omega, p)$.
 \end{enumerate}
\end{theorem}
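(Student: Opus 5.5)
The approach is a duality and atomic-decomposition argument. The $L^p$ case, $1<p<4/3$, will follow from the Hardy-space endpoint by interpolation with an $L^{p_1}$ estimate for $p_1$ slightly below $4/3$. Alternatively, it can be run directly by the same localization. I will work first with data $f$ that is smooth, say in $L^\infty(\Omega)$. For such $f$ the energy solution $u\in W^{1,2}_{\cD}(\Omega)$ exists by Lax--Milgram, since $\cD$ is nonempty and $\Omega$ is connected. The goal is then the a priori estimates \eqref{eq:smooth-hardy-endpoint} and \eqref{eq:smooth-Lp-estimate} for this $u$. Existence for general data follows by density plus closedness of the estimate. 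Uniqueness in $W^{2,p}\cap W^{1,p}_{\cD}$ follows by duality: pair a putative solution of the homogeneous problem with the energy solution of the adjoint problem (the same mixed problem) with smooth right-hand side $g$. That adjoint solution lies in $W^{2,q}$ near $\Gamma$ for some $q$ and is Hölder/$W^{1,p'}$ regular away from the interface. I need to check that the boundary terms in Green's formula vanish; this uses the weighted bound $\delta_\Gamma^{-1}\nabla u\in L^p$, which kills the interface contribution.

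The key local estimate is for an atom. Elements of $H^1_{\cD}(\Omega)$ are presumably sums of atoms $a$ supported in balls $B_r(x_0)\cap\Omega$, with $\|a\|_{L^\infty}\le |B_r|^{-1}$. The atom must have mean zero unless the ball touches $\cD$, where no cancellation is required. For such an atom I aim to show
\[
\|D^2u\|_{L^1(\Omega)}+\|\delta_\Gamma^{-1}\nabla u\|_{L^1(\Omega)}\le C .
\]
Near the ball, say on $B_{2r}$, I use an $L^2$-type (or $L^{q}$, $q<4/3$) bound for $D^2u$ and $\delta_\Gamma^{-1}\nabla u$, obtained from the local model. After flattening the $C^{1,\alpha}$ boundary and using $\theta_0$-Reifenberg flatness of $\Gamma$, the model is the half-space with a Dirichlet/Neumann split along a hyperplane. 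There the solution behaves like $r^{1/2}$ and $D^2u\in L^q$ for every $q<4/3$. Perturbation from the model to the actual domain uses the smallness $\theta_0$ and the $C^{1,\alpha}$ error terms. The condition $\alpha>1-1/p$ enters here: the flattening error $|D\Phi-I|\lesssim \rho^\alpha$ multiplied against $D u\sim\rho^{-1/2}$ must be controlled in the relevant $L^p$ norm. This should be a Krylov--Safonov/Caffarelli--Peral type level-set or mean-oscillation argument, or a freezing argument. Away from the ball, on the annuli $A_k=B_{2^{k+1}r}\setminus B_{2^k r}$, $u$ solves the homogeneous mixed problem. I combine three ingredients there:
\begin{enumerate}
\item local $W^{2,q}$ estimates on $A_k$ in terms of $r_k^{-2}\|u\|_{L^1}$ (Caccioppoli plus the model estimate);
\item the cancellation of $a$, which gives decay of $u$ via Hölder continuity of the Green's function in the pole variable, $|u|\lesssim r^{\beta}\,|x-x_0|^{2-n-\beta}$;
\item when there is no cancellation, the fact that the ball touches $\cD$, where the Green's function vanishes, giving the same decay from the boundary Hölder estimate.
\end{enumerate}
Summing $2^{-k\beta'}$ over $k$ yields the uniform bound.

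For the $L^p$ case I would either interpolate between $H^1_{\cD}$ and $L^{p_1}$, via a Calderón--Zygmund/Fefferman--Stein theory adapted to the atoms, or prove the $L^p$ estimate directly by the same localized model comparison. The identification of the complex interpolation space between $H^1_{\cD}$ and $L^{p_1}$ as $L^p$ has to be justified for this adapted Hardy space, possibly via a sharp-function characterization. If that is awkward, I will instead prove the $L^p$ estimate directly by a level-set argument, comparing to the half-space mixed problem in small balls.

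The main obstacle I expect is the local model estimate near $\Gamma$ in all dimensions. It requires showing that $D^2u$ and $\delta_\Gamma^{-1}\nabla u$ lie in $L^q$ for $q<4/3$, uniformly under Reifenberg-flat perturbations of a flat interface. One must also correctly account for the $C^{1,\alpha}$ flattening error near the singular set. I would attack this via weighted estimates in $\delta_\Gamma$ for the flat model, by explicit separation of variables in the 2D cross-section. Then I would use a perturbation/iteration argument on dyadic scales, in which the Reifenberg flatness allows choosing an approximating hyperplane interface at each scale.
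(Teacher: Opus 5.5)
\textbf{There is a genuine gap, and it is the estimate you yourself call the main obstacle.}

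Both halves of your atomic argument presuppose a local $L^q$ bound for $D^2u$ near $\Gamma$: the near-support bound on $B_{2r}$, and ingredient (1) on every annulus $A_k$ that meets $\Gamma$. Nothing in the proposal proves this bound, and the route you sketch would not.
\begin{itemize}
\item \emph{Flattening.} A $C^{1,\alpha}$ flattening has no bounded second derivatives; at best $|D^2\Phi|\lesssim\dist(\cdot,\p\Omega)^{\alpha-1}$ after regularization. Transferring $W^{2,p}$ bounds through it is therefore exactly the pure-boundary-condition problem. Accordingly, $\alpha>1-1/p$ is not produced by $|D\Phi-I|\lesssim\rho^\alpha$ acting on $\nabla u\sim\rho^{-1/2}$. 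It is the threshold for local $W^{2,p}$ estimates for the \emph{pure} Dirichlet and \emph{pure} Neumann problems on $C^{1,\alpha}$ domains, independent of the interface.
\item \emph{Scale-by-scale comparison.} Comparing with a flat-interface model at each scale is the mechanism for perturbing \emph{gradient} estimates; this is how the optimal $W^{1,s}$ theory, $s<4$, for Reifenberg-flat interfaces works. At the level of $D^2u$, whose singular set is $\Gamma$ itself rather than the approximating plane, such a comparison would again need second-order estimates off the interface. That is precisely the missing step.
\item \emph{The $L^p$ step.} Interpolating between $H^1_{\cD}$ and $L^{p_1}$ with $p_1$ near $4/3$ is circular, because the $L^{p_1}$ bound is the theorem. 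The fallback level-set comparison of $D^2u$ with the half-space runs into the same obstruction.
\end{itemize}

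\textbf{The idea that removes the obstacle is the Whitney reduction, Proposition~\ref{prop:weighted-Whitney}.} Cover $Q\cap\Omega$ by cylinders lifted from a Whitney decomposition of the boundary away from $\Gamma$, together with interior Whitney cubes in the region where $\delta_\Gamma\lesssim\dist(\cdot,\p\Omega)$. On each piece use only interior, pure Dirichlet, or pure Neumann $W^{2,p}$ estimates; this is the only place $\alpha>1-1/p$ enters. Summing gives
\[
\fint_{Q\cap\Omega}|D^2u|^p\lesssim\fint_{\kappa Q\cap\Omega}\bigl(|f|^p+(\delta_\Gamma^{-p}+\ell(Q)^{-p})|\nabla u|^p\bigr).
\]
The weighted gradient then needs only H\"older's inequality. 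Reifenberg flatness gives $\fint_{B_r}\delta_\Gamma^{-\beta}\lesssim r^{-\beta}$ for $\beta<2$ (Lemma~\ref{lem-codim2-vol}). Hence $\nabla u\in L^{r}$ yields $\delta_\Gamma^{-1}\nabla u\in L^q$ with $1/q=1/r+1/\beta$.
\begin{itemize}
\item Meyers' exponent $r=2+\epsi$ already closes the atom estimate.
\item The optimal $r<4$ gives the weighted reverse H\"older inequality (Lemma~\ref{lem:weighted-reverse-Holder}) exactly for $q<4/3$, with no explicit model computation.
\end{itemize}

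\textbf{How the paper gets the $L^p$ range.} It applies Shen's real-variable lemma to $F=\delta_\Gamma^{-1}|\nabla u|$. The term $F_Q$ comes from the Hardy endpoint applied to the localized datum $f\chi_{4Q\cap\Omega}$, corrected on $(8Q\setminus 4Q)\cap\Omega$ when cancellation is required. The term $R_Q$ comes from the weighted reverse H\"older inequality. One then returns to $D^2u$ through the Whitney reduction.

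\textbf{On uniqueness.} Your duality argument is a workable alternative to the paper's route, which uses Dahlberg's area integral to get $\mathbb N(u),\mathbb N(\nabla u)\in L^1(\p\Omega)$ and then the $L^1$ uniqueness theorem of \cite{MR3034453}. You must still supply two things:
\begin{itemize}
\item a codimension-two Hardy inequality giving $\delta_\Gamma^{-1}\nabla u\in L^1$ for an \emph{arbitrary} $u$ in the uniqueness class, since that class does not include the weighted bound;
\item pointwise bounds $|v|\lesssim\delta_\Gamma^{\gamma}$ and $|\nabla v|\lesssim\delta_\Gamma^{\gamma-1}$, for some $\gamma>0$, on the adjoint solution, so that the cutoff errors near $\Gamma$ vanish in every dimension.
\end{itemize}
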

\begin{remark}
    The endpoint $4/3$ is sharp. Indeed, in the upper half-plane the harmonic
function
\[
 u(r,\phi)=r^{1/2}\sin(\phi/2),
 \qquad 0<\phi<\pi,
\]
has zero Dirichlet data on one half of the boundary and zero Neumann data
on the other, while $|D^2u|\simeq r^{-3/2}$. Thus
$D^2u\notin L^{4/3}$ near the origin.
\end{remark}

The regularity theory for scalar mixed problems on smooth domains goes
back to Shamir and Beir\~ao da Veiga. Shamir proved, among other results,
that weak solutions are $C^{(1/2)-}$ and obtained higher regularity in directions away from the critical one \cite{Shamir1968}. 
Beir\~ao da Veiga in \cite[Theorem~B]{MR372410} subsequently established the sharp
$W^{2,p}$ regularity, $p<4/3$, for $W^{1,(4/3)+}$ weak solutions on sufficiently regular domains with $f\in \cap_{r < \infty} L^r(\Omega)$. 
Moreover, for arbitrary $f\in L^p(\Omega)$, a preliminary construction in first-order spaces also encounters a dimension-dependent restriction. Indeed, the embedding $L^p(\Omega)\hookrightarrow W^{-1,q}(\Omega)$, $q=\frac{np}{n-p}$, reaches an exponent $q>4/3$ only when $p>\frac{4n}{3n+4}$.
Thus, in dimensions $n\geq5$, this particular route does not cover a range of exponents near $p=1$.
Our Theorem~\ref{thm:poisson-smooth} removes the additional integrability assumptions on the forcing and avoids the preliminary first-order construction by directly giving the $W^{2,p}$ solvability throughout $1<p<4/3$ in every dimension, together with the Hardy-space endpoint. It also allows a $C^{1,\alpha}$ boundary and a Reifenberg-flat interface, in contrast with the $C^3$ hypotheses in \cite[Theorem~B]{MR372410}.

Other classical results describe the same singular mechanism in different
function-space scales. Savar\'e obtained the optimal
$B^{3/2}_{2,\infty}$ regularity for energy solutions on
$C^{1,1}$ domains \cite{Savare1997}. Mitrea and Mitrea established
well-posedness in Sobolev--Besov spaces for mixed problems on
creased Lipschitz domains \cite{MitreaMitrea2007}. This in particular, rules out smooth domains. More recently,
optimal $W^{1,p}$ regularity and solvability have
been obtained under Reifenberg-flat or Lipschitz assumptions by the second-named author with collaborators; see \cite{MR4232502,MR4261267}. 

\medskip

Our method also applies to nonsmooth domains. In this paper, we work on Lipschitz polyhedra, a prototypical class of domains with corners and edges. 

A bounded Lipschitz domain $\Omega\subset\bR^n$ is called a \emph{Lipschitz
polyhedron} if there are finitely many pairwise disjoint, relatively open
faces $F_1,\dots,F_J\subset\p\Omega$ such that each $F_j$ is a bounded
$(n-1)$-dimensional polytope contained in an affine hyperplane and $\p\Omega=\bigcup_{j=1}^J\overline{F_j}$.
No convexity is assumed. A boundary decomposition is \emph{facewise} if,
\begin{equation*}
    \text{for each}\,\, j,
    \quad
    \text{either}\,\,F_j \subset \cD \,\,\text{or}\,\, F_j\subset\cN.
\end{equation*}
For the polyhedral argument, we distinguish the mixed interface $\Gamma$ from the full lower-dimensional skeleton
\begin{equation}
 \Sigma:=\bigcup_{j=1}^J\p F_j,
 \qquad
 \delta_\Sigma(x):=\dist(x,\Sigma).
 \label{eq:polyhedral-skeleton}
\end{equation}
Thus $\Gamma\subset\Sigma$, but the inclusion may be strict when two adjacent faces carry the same boundary condition. Our second result gives the $W^{2,p}$ solvability on Lipschitz polyhedron in every dimension, for $p$ close to $1$.

\begin{theorem}
 \label{thm:poisson-polyhedron}
 Let $\Omega\subset\bR^n$ be a bounded and Lipschitz polyhedron with a
 facewise mixed decomposition $(\cD,\cN)$. Then there exists $p_0 = p_0(\Omega)>1$, such that
 the following assertions hold.
 \begin{enumerate}
  \item If $f\in H^1_{\cD}(\Omega)$, there exists a unique solution $u\in W^{2,1}(\Omega)\cap W^{1,1}_{\cD}(\Omega)$ of \eqref{prob:poisson}
  and
  \begin{equation}
   \|D^2u\|_{L^1(\Omega)}
   +\|\delta_\Sigma^{-1}\nabla u\|_{L^1(\Omega)}
   \leq C\|f\|_{H^1_{\cD}(\Omega)},
   \label{eq:polyhedral-hardy-endpoint}
  \end{equation}
   where $C=C(\Omega)$.
  \item If $1<p<p_0$ and $f\in L^p(\Omega)$, there exists a unique
  solution $u\in W^{2,p}(\Omega)\cap W^{1,p}_{\cD}(\Omega)$ of \eqref{prob:poisson}
  and
  \begin{equation}
   \|D^2u\|_{L^p(\Omega)}
   +\|\delta_\Sigma^{-1}\nabla u\|_{L^p(\Omega)}
   \leq C\|f\|_{L^p(\Omega)},
   \label{eq:polyhedral-Lp-estimate}
  \end{equation}
   where $C=C(\Omega,p)$.
 \end{enumerate}
\end{theorem}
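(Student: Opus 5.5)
We would prove Theorem~\ref{thm:poisson-polyhedron} by the same scheme that underlies Theorem~\ref{thm:poisson-smooth}. First we establish an a priori estimate for a single Hardy atom. Then we pass to $L^p$ by interpolation and duality, and we obtain existence by approximation.

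\textbf{Step 1: a weighted $W^{1,1}$ bound for atoms.} For an atom $a$ of $H^1_{\cD}(\Omega)$, supported in a ball $B=B_r(x_0)$, we define the solution $u=Ga$ through the Green function of the mixed problem. This is legitimate because $a$ is bounded, so the energy solution exists. If $B$ meets $\cN$ or lies away from $\cD$, the atom has mean zero. We split $\Omega$ into $2B$ and dyadic annuli $2^{k+1}B\setminus 2^kB$.
\begin{itemize}
\item On $2B$, we use an $L^2$ (or $L^q$, $q$ slightly above $1$) bound for $\nabla u$ and $D^2u$, together with the weighted Hardy term.
\item On the annuli, $u$ is harmonic with mixed data. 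We combine the cancellation of $a$ (or the vanishing of $u$ on $\cD$ near $B$) with H\"older continuity of $G$ in the pole variable. This gives decay $2^{-k\gamma}$ for some $\gamma>0$.
\end{itemize}
The required regularity comes from local estimates near each point of the skeleton $\Sigma$. Near a point of $\Sigma$, $\Omega$ is locally a polyhedral cone, and the boundary conditions are constant on each face by the facewise assumption. Separation of variables then gives $u\sim \rho^{\lambda}\Phi(\omega)$. Here $\lambda>0$ is the smallest eigenvalue exponent of the mixed Laplace--Beltrami problem on the spherical polygon, and for edges it is the corresponding exponent of the dihedral wedge.

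\textbf{Step 2: local $W^{2,1}$ estimates with the weight $\delta_\Sigma^{-1}$.} In a Whitney cube $Q$ with $\ell(Q)\simeq\delta_\Sigma$, the problem is locally pure Dirichlet, pure Neumann, or interior, on a flat piece of boundary. Classical flat estimates therefore give
\[
\|D^2u\|_{L^1(Q)}+\delta_\Sigma^{-1}\|\nabla u\|_{L^1(Q)}\lesssim \|f\|_{L^1(Q^*)}+\ell(Q)^{-2}\|u-c_Q\|_{L^1(Q^*)}.
\]
Here $c_Q=0$ when $Q^*$ meets $\cD$. When $Q^*$ touches only $\cN$ or is interior, we take $c_Q$ to be the mean, and a Poincar\'e inequality handles it.

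We sum over Whitney cubes. The lower-order term is controlled by $\delta_\Sigma^{-1-\epsilon}|u-c|$, and its integrability follows from the decay $\rho^{\lambda}$ with $\lambda>0$. This is exactly where Lipschitz polyhedra allow only $p$ close to $1$. Integrability of $\rho^{\lambda-2}$ in $L^p$ near a stratum of codimension $m\ge2$ requires $p(2-\lambda)<m$. That condition holds for $p<p_0$ for some $p_0>1$, since $\lambda>0$ and $m\geq 2$.

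The constant-shift issue for pure-Neumann regions around $\Sigma$ is handled by the fact that $\Gamma\neq\emptyset$ and $\Omega$ is connected. This pins the constants down through a chain argument.

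\textbf{Step 3: from atoms to $H^1_{\cD}$ and $L^p$.} Summing over the atoms gives \eqref{eq:polyhedral-hardy-endpoint}. For $L^p$ we interpolate the operator $f\mapsto (D^2u,\delta_\Sigma^{-1}\nabla u)$ between $H^1_{\cD}$ and $L^{q}$ for some $q>1$. The $L^q$ bound comes from the same Whitney argument applied to energy solutions with bounded data, valid for $q<p_0'$, where $p_0'$ is determined by $\lambda$.

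Existence for general $f\in L^p$ follows by density of bounded functions together with the estimate. Uniqueness follows by duality: if $u\in W^{2,p}\cap W^{1,p}_{\cD}$ solves the homogeneous problem, we test against $Gg$ for $g\in C_c^\infty$. The integration by parts is justified using the weighted bound on $\nabla(Gg)$, which kills the boundary terms near $\Sigma$.

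The main obstacle is Step 1 at vertices where many faces of both types meet. There we need a uniform positive lower bound on $\lambda$ over all cone points. We also need pointwise decay of the Green function's gradient strong enough to make the annular sum converge. We would obtain this by first proving De Giorgi--Nash--Moser H\"older estimates up to the boundary, which hold on Lipschitz domains with mixed data. We would then upgrade them via Caccioppoli and Whitney-scale flat estimates.
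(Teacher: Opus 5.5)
Your atom estimate (Steps~1--2) is plausible in outline. It is close to the paper's argument, with pointwise Green-function H\"older bounds in place of the $L^2$ duality decay. Step~3, however, has a genuine gap. You interpolate between the $H^1_{\cD}$ endpoint and an $L^q\to L^q$ bound for some $q>1$, which you say ``comes from the same Whitney argument applied to energy solutions with bounded data.'' The Whitney argument only trades $D^2u$ for $f$ plus the lower-order quantity $\delta_\Sigma^{-1}\nabla u$ (or $\delta_\Sigma^{-2}|u-c_Q|$). Bounding that quantity in $L^q$ by $\|f\|_{L^q(\Omega)}$, uniformly over bounded $f$, \emph{is} assertion~(2), so the step is circular.

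Nothing in your toolkit supplies this bound. The decay $|u-c|\lesssim\rho^\lambda$ is a property of solutions of the \emph{homogeneous} problem, quantified by local averages. For bounded $f$ it gives bounds in terms of $\|f\|_{L^\infty}$, not $\|f\|_{L^q}$, and nothing uniform when $f$ concentrates near $\Sigma$ at many scales. The global route is no better: with $\nabla u\in L^r$, $r\in(2,2+\epsi)$, from energy/Meyers estimates, H\"older against $\delta_\Sigma^{-1}\in L^\beta$ ($\beta<2\leq n$) bounds $\|\delta_\Sigma^{-1}\nabla u\|_{L^q}$ only by $\|f\|_{L^{r_*}}$. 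Here $1/q=1/r+1/\beta>1/r+1/n=1/r_*$, so there is always a loss of integrability; this is exactly the obstruction stressed in the introduction. Interpolating the endpoint against such a lossy bound cannot give $L^p\to L^p$. Conversely, a genuine $L^q$ endpoint for all $q<p_0$ would make interpolation unnecessary.

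The missing idea is the localization/real-variable step the paper uses. For each small cube $Q$, write $u=w_Q+v_Q$. Here $w_Q$ solves the problem with the localized datum $g_Q$, namely $f\chi_{4Q\cap\Omega}$, corrected on $(8Q\setminus4Q)\cap\Omega$ to have zero mean when $2B_Q\cap\cD=\emptyset$. This datum is a multiple of an adapted $(1,s)_{\cD}$-atom, so the endpoint gives $\int_\Omega\delta_\Sigma^{-1}|\nabla w_Q|\lesssim|Q|\,(\fint_{8Q}|f\chi_\Omega|^s)^{1/s}$. The remainder $v_Q$ is homogeneous in $4Q\cap\Omega$, hence satisfies a weighted reverse H\"older inequality $(\fint_{2Q}(\delta_\Sigma^{-1}|\nabla v_Q|\chi_\Omega)^q)^{1/q}\lesssim\fint_{4Q}\delta_\Sigma^{-1}|\nabla v_Q|\chi_\Omega$ for some $q>1$. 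Lemma~\ref{lem:real-variable} then yields $\|\delta_\Sigma^{-1}\nabla u\|_{L^p}\lesssim\|\delta_\Sigma^{-1}\nabla u\|_{L^1}+\|f\|_{L^p}$ for $1<p<q$. The endpoint and the Whitney reduction finish the proof.

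Your own ingredients fit this scheme. The H\"older decay of homogeneous solutions at $\Sigma$, combined with Whitney-scale gradient bounds, gives such a reverse H\"older inequality precisely under your condition $q(2-\lambda)<2$ (codimension $m=2$). The paper instead derives it from the $W^{1,2+\epsi_0}$ reverse H\"older inequality.

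Two smaller corrections. First, the Whitney-cube bound in Step~2 with $\|f\|_{L^1(Q^*)}$ on the right is false, since $W^{2,1}$ estimates fail for $L^1$ data; for atoms use $|Q|^{1-1/r}\|a\|_{L^r(Q^*)}$ with $r>1$. Second, separation of variables at vertices is unnecessary, and for $n\geq3$ it does not give a single $\rho^\lambda$ profile because of edge singularities. The uniform exponent you need is just the De Giorgi--Nash--Moser boundary H\"older exponent.
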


The choice of Lipschitz polyhedra is natural for two reasons. \textit{First}, additional geometric structure is needed for global $W^{2,p}$ estimates: such estimates cannot be expected on arbitrary Lipschitz domains, even under pure Dirichlet boundary conditions. See Dahlberg's classical counterexample \cite{MR544584}. \textit{Second}, polyhedral domains arise naturally in numerical analysis. To achieve optimal convergence rates on quasi-uniform meshes, traditional numerical approximation error estimates (for example: finite element approximations \cite{MR1930132}) frequently rely on $W^{2,2}$ regularity \cite{MR2373954}. At corners, edges, or mixed-boundary interfaces, such regularity may break down \cite[Examples~5.5.2 and~5.5.4]{MR2373954}. In this low-regularity setting, $W^{2,p}$ estimates with $p<2$ provide a framework for approximation in $L^p$ and $W^{1,p}$, without requiring $H^2$ regularity.

\medskip

The analysis of elliptic problems on polyhedral domains has a long history. Following Kondrat\textquotesingle ev's foundational work, the singular behavior near corners, edges, and vertices has been studied extensively through operator pencils, singular expansions, and weighted Sobolev spaces; see
\cite{Kondratiev1967,MR3396210,MR1147281,MazyaRossmann2003, MazyaRossmann2010, BacutaMazzucatoNistorZikatanov2010} and the references therein. 
These methods give precise geometry-dependent regularity criteria. For example, Dauge's unweighted regularity theorem on three-dimensional curvilinear polyhedra is formulated for variational solutions, with the assumption $p\geq6/5$ so that $L^p(\Omega)$ is embedded into the dual of the energy space $W^{1,2}_\cD(\Omega)$; see \cite[(3.1) and Theorem~3.2]{MR1147281}.
More generally, Maz'ya and Rossmann in \cite[Theorems~4.4 and~7.2]{MazyaRossmann2003} established weighted $L^p$ solvability on polyhedral cones and a Fredholm theory  on bounded three-dimensional polyhedral domains under spectral conditions.
In arbitrary dimension, B\u{a}cu\c{t}\u{a}, Mazzucato, Nistor, and Zikatanov \cite{BacutaMazzucatoNistorZikatanov2010} developed regularity and well-posedness in weighted $L^2$-Sobolev spaces. Their well-posedness theorem assumes, in particular, that no two adjacent faces carry Neumann conditions, although their regularity theorem does not require this restriction.

Our emphasis is on a direct, unweighted solvability result for arbitrary $f\in L^p(\Omega)$ with $p>1$ sufficiently close to $1$, together with the adapted $H^1_{\cD}\to W^{2,1}$ endpoint.
Theorem~\ref{thm:poisson-polyhedron} applies in every dimension and permits arbitrary facewise Dirichlet--Neumann decompositions, including adjacent Neumann faces, and therefore is complementary to the more detailed geometry-dependent regularity supplied by the spectral theory.

These estimates also have a direct consequence for the numerical approximation of solutions to \eqref{prob:poisson}. For instance, let $V_{h,\cD}$ be the space of continuous piecewise affine functions vanishing on $\cD$, on a shape-regular simplicial mesh that respects the boundary decomposition \cite{MR1930132}, and let $h$ denotes the maximum element diameter. The Scott--Zhang approximation theorem \cite[Theorem~4.1]{ScottZhang1990}, together with our Theorem~\ref{thm:poisson-polyhedron}, gives, for $1<p<p_0$,
$$
 \inf_{v_h\in V_{h,\cD}}
 \left(
  \|u-v_h\|_{L^p(\Omega)}
  +h\|\nabla(u-v_h)\|_{L^p(\Omega)}
 \right)
 \leq Ch^2\|f\|_{L^p(\Omega)}.
$$
At $p=1$, the Hardy-space endpoint gives the same bound with $Ch^2\|f\|_{H^1_{\cD}(\Omega)}$ on the right-hand side. 
Our results therefore provide regularity and approximation estimates for the numerical analysis of scalar diffusion problems. Extensions to planar Lam\'e system will also be discussed in Section~\ref{sec:lame}.

\medskip

The proofs of both theorems follow a common strategy. We first construct an adapted Whitney-type decomposition that reduces second-order estimates to weighted gradient estimates, capturing the codimension-two singular structure. From this, we then establish the endpoint $H^1_{\cD}\to W^{2,1}$ estimate using H\"older estimates and duality. Finally, we obtain the $W^{2,p}$ estimates by a real-variable interpolation argument. The range of $p$ comes from local estimates for homogeneous solutions: the optimal $W^{1,s}$ estimates, $s<4$, in the $C^{1,\alpha}$ setting, and a $W^{1,2+\epsi}$ estimate on Lipschitz polyhedra. Our approach avoids preliminary $W^{1,q}_{\cD}$ solvability for the original rough datum and, in the polyhedral case, explicit analysis of corner and edge spectra.

The paper is organized as follows. In Section~\ref{sec-preliminaries}, we introduce the notation, definitions, and function spaces used throughout the paper. Section~\ref{sec-whitney} develops a Whitney-type reduction that is central to our approach, while Section~\ref{sec-hardy} establishes the required estimates in endpoint Hardy spaces. In Section~\ref{sec-uniqueness}, we prove uniqueness, and in Section~\ref{sec-interpolation}, we establish an interpolation lemma and use it to obtain solvability and global $W^{2,p}$ estimates for the Poisson equation with $L^p$ data. Finally, in Section~\ref{sec:lame}, we extend our approach to the Lam\'e system of linear elasticity \cite{MR936420} in two dimensions with mixed boundaries and discuss the key challenges in three and higher dimensions.

\section{Definition and function spaces}
\label{sec-preliminaries}

\subsection{$C^{1,\alpha}$ domains and Reifenberg flat interface}

We use coordinates $x=(x_1,x')$, where
$x'=(x_2,\ldots,x_n)\in\bR^{n-1}$.
Write $B'_{R_0}=\{x'\in\bR^{n-1}:|x'|<R_0\}$.

\begin{definition}
 A bounded domain $\Omega\subset\bR^n$ is a Lipschitz domain with character $(R_0,M)$ if, for every $x_0 \in\partial\Omega$, there are orthogonal coordinates centered at $x_0$ in which
 $$
  \Omega\cap Q_{R_0}
  =\bigl\{(x_1,x')\in Q_{R_0}:x_1>\psi_{\mathcal{P}}(x')\bigr\},
 $$
 where $Q_{R_0}=(-R_0,R_0)\times B'_{R_0}$ and $\psi\in C^{0,1}(B'_{R_0})$ satisfies
 $$
  \psi(0)=x_0,
  \qquad
  \|\nabla\psi_{\mathcal{P}}\|_{L^\infty(B'_{R_0})}\leq M.
 $$
 For $0<\alpha\leq1$, we say that $\Omega$ is a $C^{1,\alpha}$ domain with character $(R_0,M)$ if these charts can be chosen with $\psi\in C^{1,\alpha}(B'_{R_0})$ and
 $$
  \|\nabla\psi_{\mathcal{P}}\|_{L^\infty(B'_{R_0})}
  +R_0^\alpha[\nabla\psi_{\mathcal{P}}]_{C^{0,\alpha}(B'_{R_0})}
  \leq M.
 $$
 The constants $R_0$ and $M$ are uniform in $x_0$.
\end{definition}

\begin{assumption}\label{asp-cork}
    We say that $(\cD,\cN)$ satisfies the Dirichlet corkscrew condition if there exist $R_0>0$ and $c\in(0,1)$ such that, for every
 $x_0 \in\Gamma$ and $0<r<R_0$, there is a point
 $y\in\cD\cap B_r(x_0)$ for which
 $$
  B_{cr}(y) \cap \p \Omega \subset \cD\cap B_r(x_0).
 $$
\end{assumption}

The following interface condition was introduced in \cite{MR4261267} and \cite{MR4232502}. For $n=2$, this implies the interface $\Gamma$ consists of finitely many points on $\p\Omega$.
\begin{assumption}[$\theta$-flat]
  Let $0<\theta<1$. We say that $(\cD,\cN)$ has a $\theta$-Reifenberg-flat interface, relative to $\partial\Omega$, if there exists some $R_0 > 0$, such that for every $x_0 \in \Gamma$ and
 $0<r<R_0$, boundary graph coordinates as above can be chosen so that
 \begin{align}
  &\Gamma \cap B_r
   \subset\partial\Omega\cap B_r\cap\{|x_2|\leq\theta r\},
   \label{eq:RF1}\\
  &\partial\Omega\cap B_r\cap\{x_2>\theta r\}
   \subset\cD,
   \label{eq:RF2}\\
  &\partial\Omega\cap B_r\cap\{x_2<-\theta r\}
   \subset\cN.
   \label{eq:RF3}
 \end{align}
\end{assumption}

When $\Gamma \subset \p\Omega$ is given in boundary graph coordinates by $\{x \in \p\Omega: x_2 = \varphi(x_3,\ldots,x_n)\}$ for some $\varphi \in C^1$, it is $\theta$-flat with an arbitrarily $\theta$ and $R=R(\theta)$.

Clearly, if $\Gamma$ is $\theta$-flat with an absolute small constant, then $(\cD, \cN)$ satisfies a corkscrew condition. 
In the following lemma, we show flatness of $\Gamma$ gives a quantitative control of the volume of neighborhoods of $\Gamma$. Recall $\delta_\Gamma(x):=\dist(x,\Gamma)$ for $x\in\bR^n$. For a measurable set $E$ with $0<|E|<\infty$, we write
$$
 \fint_E f:=\frac{1}{|E|}\int_E f.
$$
\begin{lemma} \label{lem-codim2-vol}
Let $\Omega$ be a bounded Lipschitz domain, and let $0<\epsilon<1$. There exists $\theta_0=\theta_0(n,M,\epsilon)>0$ such that, if the interface is $\theta$-Reifenberg flat with $\theta \in (0, \theta_0)$, then
\begin{equation}
  \fint_{B_r(x_0)}\delta_\Gamma(x)^{-s}\,dx\leq Cr^{-s},
  \quad
  \forall x_0 \in \overline{\Omega},\,\,\forall 0<r<\operatorname{diam}(\Omega), \,\,\forall 0<s<2-\epsilon.
  \label{eqn-260819-1032}
\end{equation}
\end{lemma}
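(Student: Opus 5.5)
The plan is to reduce \eqref{eqn-260819-1032} to a covering estimate for tubular neighborhoods of $\Gamma$ and then integrate via the layer-cake formula. The key claim is the following. For every $x_0\in\overline\Omega$ and $0<t\le r$,
\begin{equation*}
 |\{x\in B_r(x_0):\delta_\Gamma(x)<t\}|\le C\,r^n\Big(\frac{t}{r}\Big)^{2-\epsilon/2},
\end{equation*}
where $C$ depends on $n,M,\epsilon$ and, for $r$ comparable to $\operatorname{diam}(\Omega)$, on $\Omega$. Granting this, the layer-cake formula gives
\[
\int_{B_r}\delta_\Gamma^{-s}=\int_0^\infty|\{x\in B_r:\delta_\Gamma<\lambda^{-1/s}\}|\,d\lambda.
\]
We split at $\lambda=r^{-s}$. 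The range $\lambda\le r^{-s}$ contributes at most $|B_r|r^{-s}$. For $\lambda>r^{-s}$ we substitute $t=\lambda^{-1/s}$ and obtain $Cr^{n-2+\epsilon/2}\int_0^r t^{2-\epsilon/2-s-1}\,dt$. This integral converges precisely because $s<2-\epsilon<2-\epsilon/2$, and it yields $Cr^{n-s}$.

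To prove the covering estimate, we first dispose of the case $\dist(x_0,\Gamma)\ge 2r$. Then $\delta_\Gamma\ge r$ on $B_r(x_0)$ and there is nothing to prove. Otherwise we replace $x_0$ by a nearby point of $\Gamma$ and $r$ by $3r$. It then suffices to show that $\Gamma\cap B_r(z)$, for $z\in\Gamma$, can be covered by $N(t)\le C(r/t)^{n-2+\epsilon/2}$ balls of radius $t$. The $t$-neighborhood is then covered by the doubled balls, whose total volume is at most $CN(t)t^n$, and this gives the claimed bound. For large radii $r\ge R_0$ we cover by finitely many balls of radius $R_0$, at a cost depending on $\Omega$.

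The main step, and the main obstacle, is this packing bound from Reifenberg flatness. By \eqref{eq:RF1}, at every $z\in\Gamma$ and every scale $\rho<R_0$, the set $\Gamma\cap B_\rho(z)$ lies in the slab $\{|x_2|\le\theta\rho\}$ intersected with the Lipschitz graph $\partial\Omega$. In the flattened coordinates this slab is a $\theta\rho$-neighborhood of an $(n-2)$-dimensional set inside $\bR^{n-1}$. Bi-Lipschitz distortion with constant depending on $M$ puts it into a $C(M)\theta\rho$-neighborhood of an $(n-2)$-plane. Such a neighborhood, intersected with $B_\rho$, can be covered by $K\le C_0(n,M)\,\eta^{-(n-2)}$ balls of radius $\eta\rho$ for a fixed ratio $\eta$, provided $C(M)\theta\le\eta$. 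We take $\theta_0=\eta/C(M)$.

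We then iterate, applying the same fact in each subball centered at a point of $\Gamma$, which is legitimate since flatness holds at every point and scale. After $k$ steps this gives $N(\eta^k r)\le K^k$, that is, $N(t)\le C(r/t)^{\log K/\log(1/\eta)}$. The exponent equals $(n-2)+\log C_0/\log(1/\eta)$. Choosing $\eta$ small depending on $n,M,\epsilon$ makes it at most $n-2+\epsilon/2$, and this fixes $\theta_0$. The delicate bookkeeping is ensuring that each covering ball can be recentered at a point of $\Gamma$, at the cost of a factor $2$ in radius absorbed into $C_0$, and that the chart at each center is the one \eqref{eq:RF1} provides, so that the Lipschitz character $M$ controls the distortion uniformly.
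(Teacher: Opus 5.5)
Your proposal is correct and follows essentially the same route as the paper. The paper's proof is a covering argument, with details deferred to \cite[Lemma~5.1]{MR4232502}: Reifenberg flatness is iterated at every point of $\Gamma$ and every scale to bound the volume of the $t$-neighborhoods of $\Gamma$ in $B_r$ by $Cr^n(t/r)^{2-\epsilon'}$. That bound is then turned into integrability of $\delta_\Gamma^{-s}$ by the layer-cake type lemma \cite[Lemma~2.5]{MR3034453}, which plays the role of your splitting at $\lambda=r^{-s}$.

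One minor imprecision: after lifting, the slab is a neighborhood of a Lipschitz $(n-2)$-graph, not of a plane. This does not affect anything, since you cover in the projected coordinates and lift by the $\sqrt{1+M^2}$-Lipschitz map $x'\mapsto(\psi(x'),x')$, so your count $K\le C_0(n,M)\eta^{-(n-2)}$ still holds.
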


The proof follows from a covering argument and \cite[Lemma~2.5]{MR3034453}. For details, see \cite[proof of Lemma~5.1]{MR4232502}.

\subsection{Function spaces and atoms} \label{sec-160831-0602}
For $1\leq p<\infty$, as usual, define 
\begin{equation*}
    W_{\cD}^{1,p}(\Omega) :=\overline{C_\cD^\infty(\Omega)},
    \,\,\text{with}\,\,
    C_\cD^\infty(\Omega):= \{\varphi \in C^\infty_c(\bR^n), \varphi=0\,\,\text{in a neighborhood of}\,\,\cD\},
\end{equation*}
where the closure is taken with respect to the usual $W^{1,p}$ norm. 

A function $u\in W^{1,p}_{\cD}(\Omega)$ is called a weak solution of \eqref{prob:poisson}, if
\begin{equation}
  \int_\Omega\nabla u\cdot\nabla\varphi\, dx
  =\int_\Omega f\varphi\, dx
  \qquad
  \forall\varphi\in C_{\cD}^\infty(\Omega).
  \label{eq:poisson-weak}
\end{equation}

For a Euclidean ball $B=B_r(x_0)$ centered at a point
$x_0\in\overline\Omega$, write $B_\Omega :=B\cap\Omega$.
We adapt the usual definition for atoms and atom Hardy spaces to mixed problems.
\begin{definition}
  Let $1<q\leq\infty$.  A measurable function $a$ is an adapted
  $(1,q)$-atom associated with $B$ if
  \begin{equation}
    \operatorname{supp} a\subset B_\Omega,
    \qquad
    \|a\|_{L^q(\Omega)}\leq |B_\Omega|^{1/q-1},
    \label{eq:atom-size}
  \end{equation}
and the cancellation condition
  \begin{equation}
    \int_\Omega a(x)\, dx=0
    \qquad\text{whenever }2B\cap\cD=\varnothing.
    \label{eq:atom-cancellation}
  \end{equation}
\end{definition}

\begin{definition}[Adapted atomic Hardy space]
 The space $H_{\cD}^1(\Omega)$ consists of all $f\in L^1(\Omega)$
 admitting a representation
 \begin{equation}
  f=\sum_{j=1}^\infty\lambda_j a_j
  \quad\text{in }L^1(\Omega),
  \qquad
  \sum_{j=1}^\infty|\lambda_j|<\infty,
  \label{eq:atomic-representation}
 \end{equation}
 where the $a_j$ are adapted $(1,\infty)_{\cD}$-atoms.  Its norm is
 \begin{equation}
  \|f\|_{H_{\cD}^1(\Omega)}
  :=\inf\sum_j|\lambda_j|,
  \label{eq:atomic-norm}
 \end{equation}
 where the infimum is taken over all such representations.
\end{definition}

We shall also use adapted $(1,s)_{\cD}$-atoms in the interpolation
argument.  The following standard reduction allows us to do so without
changing the Hardy space.

\begin{lemma}
\label{lem:atom-decomposition}
Let $\Omega\subset\mathbb R^n$ be a bounded Lipschitz domain with
$\emptyset\neq \mathcal D\subset\partial\Omega$. Suppose $a$ is an $(1,s)_{\mathcal D}$-atom for some $s\in (1,\infty)$. Then, there exist $(1,\infty)_{\mathcal D}$-atoms $\{a_j\}_j$ and $\lambda_j \in \bR$, such that
\[
    a=\sum_{j=1}^\infty \lambda_j a_j
    \qquad\text{in }L^1(\Omega)
    \quad
    \text{and}
    \quad
    \sum_{j=1}^\infty |\lambda_j|\leq C.
\]
Here $C$ depends only on $n$, $s$, and the Lipschitz character of $\Omega$. 
\end{lemma}

The proof is a simple adaption of the standard Coifman--Weiss decomposition of mean-zero atoms. We include here for completeness.

\begin{proof}
Since $\Omega$ is Lipschitz, $(\Omega,|\cdot|,dx)$ is a space of
homogeneous type.  If $2B\cap\cD=\varnothing$, then $a$ is an ordinary
mean-zero $(1,s)$-atom, and the conclusion follows from the
Coifman--Weiss decomposition; see \cite[Theorem~A]{MR447954}.

Suppose next that $2B\cap\cD\neq\varnothing$.  Put
\[
 m_B:=\fint_{B_\Omega}a\,dx,
 \qquad
 a_0:=m_B\chi_{B_\Omega},
 \qquad
 \widetilde a:=\frac12(a-a_0).
\]
H\"older's inequality gives $|m_B|\leq|B_\Omega|^{-1}$, so $a_0$ is
an adapted $(1,\infty)_{\cD}$-atom.  Also,
\[
 \int_\Omega\widetilde a\,dx=0,
 \qquad
 \|\widetilde a\|_{L^s(\Omega)}
 \leq |B_\Omega|^{1/s-1}.
\]
Thus $\widetilde a$ is an ordinary mean-zero $(1,s)$-atom.  Applying
the Coifman--Weiss decomposition to $\widetilde a$ and using
$a=a_0+2\widetilde a$ proves the lemma.
\end{proof}

As a useful consequence, if $1<s<\infty$ and $f$ is an adapted $(1,s)_\cD$-atom, then $f \in H^1_\cD$ with
$\|f\|_{H^1_\cD (\Omega)} \leq C_s$.

\section{A Whitney reduction}
\label{sec-whitney}

The following proposition reduces the local $W^{2,p}$ estimate to
a weighted estimate for the gradient. We treat the two classes of
domains appearing in our main theorems.

\begin{proposition}
 \label{prop:weighted-Whitney}
Let $1<p<\infty$, and suppose that one of the following holds:
\begin{enumerate}
    \item $\Omega\subset\bR^n$, $n\geq2$, is a bounded $C^{1,\alpha}$ domain with $\alpha>1-1/p$, and $\Sigma=\Gamma$ is the Dirichlet--Neumann interface;
    \item $\Omega\subset\bR^n$, $n\geq2$, is a bounded Lipschitz polyhedron, and $\Sigma$ is its full skeleton. Each open $(n-1)$-dimensional face carries either the Dirichlet or the Neumann boundary condition.
\end{enumerate}
There exist constants $r_0>0$, $\kappa>1$, and $C>0$ such that, if $f\in L^p(\Omega)$ and $u\in W^{1,1}_\cD(\Omega) \cap W^{2,p}_{loc}(\overline\Omega \setminus \Sigma)$ is a weak solution of \eqref{prob:poisson}, then
\begin{equation}
\label{eq:weighted-Whitney}
    \fint_{Q\cap\Omega}|D^2u|^p
    \leq C\fint_{\kappa Q\cap\Omega}
    \left(
        |f|^p+
        \bigl(\delta_\Sigma^{-p}+\ell(Q)^{-p}\bigr)
        |\nabla u|^p
    \right)
\end{equation}
for every cube $Q$ centered in $\overline{\Omega}$ with $\ell(Q)<r_0$.  The constant $\kappa$ depends only on $n$ and the Lipschitz character of $\Omega$; $C$ and $r_0$ may additionally depend on $p$ and the $C^{1,\alpha}$ character in case (a).
\end{proposition}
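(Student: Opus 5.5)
We plan a Whitney-type covering of $Q\cap\Omega$ adapted to $\Sigma$, followed by local $W^{2,p}$ estimates on each piece away from $\Sigma$ and a finite-overlap summation. Each piece sees at most one boundary condition, and the weight $\delta_\Sigma^{-p}$ absorbs the lower-order term created by subtracting a constant.

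\emph{Step 1 (covering).} Fix a cube $Q$ centered in $\overline\Omega$ with $\ell(Q)<r_0$. Cover $Q\cap\Omega\setminus\Sigma$ by balls $B_i=B_{r_i}(x_i)$, $x_i\in Q\cap\overline\Omega$, with radius
$$r_i=c\min\{\delta_\Sigma(x_i),\ell(Q)\},$$
where $c$ is small depending on $n$ and the Lipschitz character. By the Besicovitch or Vitali lemma we may take the family $\{4B_i\}$ with bounded overlap. Each $4B_i$ lies in $\kappa Q$ and satisfies $\delta_\Sigma\simeq r_i$ there when $r_i<c\ell(Q)$. Since $|\Sigma|=0$ (it has codimension at least two inside $\partial\Omega$, which has measure zero), these balls cover $Q\cap\Omega$ up to a null set.

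\emph{Step 2 (boundary conditions on a single ball).} Because $4B_i$ stays away from $\Sigma$, the set $4B_i\cap\partial\Omega$ is either empty or meets only $\cD$ or only $\cN$. In case (a) this uses $\Sigma=\Gamma$. In case (b) it uses that $\Sigma$ is the full skeleton, so $4B_i\cap\partial\Omega$ lies in a single flat face. On $4B_i$ we apply the classical local $W^{2,p}$ estimate, scaled to radius $r_i$:
$$\|D^2u\|_{L^p(B_i\cap\Omega)}\le C\bigl(\|f\|_{L^p(2B_i\cap\Omega)}+r_i^{-1}\|\nabla(u-m_i)\|_{L^p(2B_i\cap\Omega)}\bigr).$$
Here $m_i$ is chosen as follows:
\begin{itemize}
\item for interior balls and pure Neumann balls, $m_i$ is an arbitrary constant, which is allowed because constants are admissible;
\item for Dirichlet balls, $m_i=0$.
\end{itemize}
The gradient only enters after first controlling $r_i^{-2}\|u-m_i\|_{L^p}$ by Poincar\'e's inequality. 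On Dirichlet balls this uses a boundary Poincar\'e inequality, which holds because $\cD$ occupies a fixed fraction of the boundary inside $2B_i$. In case (b), when the ball touches a flat face, we use odd or even reflection across the hyperplane, so the estimate follows from interior estimates. In case (a) the boundary is $C^{1,\alpha}$. We flatten it and use $W^{2,p}$ estimates for divergence-form operators whose coefficients are $C^{0,\alpha}$ after flattening.

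\emph{Step 3 (the main obstacle: regularity of the flattened problem).} The difficulty is case (a). A $C^{1,\alpha}$ flattening produces a divergence-form equation with $C^\alpha$ coefficients and no second derivatives of the map. In nondivergence form it would involve $D^2\psi$, which does not exist. So we need $W^{2,p}$ estimates for divergence-form equations with $C^\alpha$ coefficients, with conormal or Dirichlet conditions. For these we expect to need $\alpha>1-1/p$, for instance via the fact that $W^{1,p}$ boundary traces must make sense for $\nabla u$ composed with the coefficients. Equivalently, we need $C^{\alpha}\cdot W^{1,p}\subset W^{1,p}$ locally. The precise scheme we would try is a difference-quotient/Campanato argument in tangential directions. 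The normal second derivative is then recovered from the equation, with a fractional Besov step $B^{\alpha}$ in the regime $\alpha>1-1/p$. The constant's dependence on $r_i$ scales correctly because the $C^\alpha$ seminorm of the coefficients only improves at small scales. We also need $r_0$ small so that these estimates are uniform.

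\emph{Step 4 (summation).} Summing over $i$ and using bounded overlap gives
$$\int_{Q\cap\Omega}|D^2u|^p\le C\int_{\kappa Q\cap\Omega}|f|^p+C\sum_i r_i^{-p}\int_{2B_i\cap\Omega}|\nabla u|^p .$$
On $2B_i$ we have $r_i^{-p}\lesssim \delta_\Sigma^{-p}+\ell(Q)^{-p}$. Dividing by $|Q\cap\Omega|\simeq|\kappa Q\cap\Omega|$, which is comparable because $\Omega$ is Lipschitz, yields \eqref{eq:weighted-Whitney}. The hypothesis $u\in W^{2,p}_{loc}(\overline\Omega\setminus\Sigma)$ guarantees that the local estimates apply a priori on each ball.
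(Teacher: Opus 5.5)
\textbf{Gap: Step~3 would fail.} Your route to the local estimate \eqref{eq:local-W2p-away-Sigma} near a single-type boundary patch in case (a) does not work. Flattening by $\Phi(y',y_n)=(y',y_n+\psi(y'))$ with $\psi\in C^{1,\alpha}$ does not preserve $W^{2,p}$: $D^2(u\circ\Phi)$ contains the term $(\partial_n u\circ\Phi)\,D^2\psi$. So when $\partial_n u\neq0$ at the boundary (e.g.\ a positive Dirichlet solution, by Hopf) and $\psi\notin W^{2,p}_{loc}$, the flattened solution is not in $W^{2,p}$ even if $u$ is. Moreover, an estimate for $u\circ\Phi$ could not be pulled back without $D^2\Phi$.

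Consistently with this:
\begin{itemize}
\item Divergence-form operators with merely $C^\alpha$ coefficients have no $W^{2,p}$ theory; already $(a(x_1)v')'=0$ forces $v''=-a'/a^2$.
\item Tangential difference quotients only give Besov or $C^{1,\alpha}$-type gains, because difference quotients of $C^\alpha$ coefficients are only $O(h^{\alpha-1})$.
\item The inclusion $C^\alpha\cdot W^{1,p}\subset W^{1,p}$ that you invoke is false (multiply the constant $1$ by a $C^\alpha$ function).
\end{itemize}
The threshold $\alpha>1-1/p$ enters differently. With a regularized flattening one has $|D^2\Phi|\lesssim t^{\alpha-1}$. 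The resulting first-order error terms $t^{\alpha-1}\nabla\tilde u$ are then controlled in $L^p$ by trace and Hardy inequalities, with a small factor $r^\alpha$ in front of $\|D^2\tilde u\|_{L^p}$, exactly when $(\alpha-1)p>-1$.

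The paper does not prove this step at all. It cites the $W^{2,p}$ theory for the Dirichlet problem on $C^{1,\alpha}$ domains with $\alpha>1-1/p$, and gets the Neumann case from oblique-derivative estimates, since $\nu\in C^\alpha$. Replacing your Step~3 by that citation closes the gap.

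\textbf{The rest is correct, by a simpler decomposition than the paper's.} The paper works in graph coordinates at a point of $\Sigma$. It lifts a surface Whitney decomposition of $\kappa_1Q'\setminus\Sigma'$ to boxes $T_j$ of height $\ell_j$, covers the remainder by interior Whitney cubes in the non-tangential region $\mathcal G_\alpha$, and treats cubes not centered on $\Sigma$ separately. Your balls of radius $c\min\{\delta_\Sigma,\ell(Q)\}$ form a Whitney decomposition relative to $\Sigma$ itself. This avoids the lifting map and the covering claim for $\mathcal G_\alpha$, and handles all cubes at once. The price is that your balls may sit anywhere relative to $\partial\Omega$, which requires three routine checks you should make explicit:
\begin{itemize}
\item Bounded overlap of the dilates $4B_i$ does not follow from Besicovitch alone. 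It follows because the radius function is $c$-Lipschitz, so intersecting dilates have comparable radii.
\item The single-boundary-type property needs $c$ small relative to $M$, because a ball can meet a Lipschitz graph in a disconnected set. The lifted graph patch over the projected ball is connected and misses $\Sigma$ for such $c$.
\item The boundary Poincar\'e inequality degenerates when $2B_i$ only grazes $\cD$. Classify balls by comparing $\dist(x_i,\partial\Omega)$ with $r_i$, and recenter near-boundary balls at a nearby boundary point.
\end{itemize}
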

Here, $\ell(Q)$ represents the side length of $Q$ and $\delta_\Sigma(x) = \operatorname{dist}(x,\Sigma)$.
In the proposition, only local regularity away from $\Sigma$ is required. The estimate follows by summing over Whitney cubes, and, in particular, $\infty \leq \infty$ is allowed.

\medskip

The argument uses two properties shared by these domains: the boundary is locally a Lipschitz graph, and local $W^{2,p}$ estimates hold away from $\Sigma$. In case~{\rm (1)}, no additional assumption on $\Gamma$ is needed.

More precisely, for small cubes $Q$ in a coordinate neighborhood with $2Q\cap\Sigma = \emptyset$,
\begin{equation}
\label{eq:local-W2p-away-Sigma}
    \fint_{Q\cap\Omega}|D^2u|^p
    \leq C\fint_{2Q\cap\Omega}
    \left(|f|^p+\ell(Q)^{-p}|\nabla u|^p\right).
\end{equation}

The interior case $2Q \subset \Omega$ is standard. For Lipschitz polyhedra, the boundary estimates reduce locally to the standard half-space estimates. For $C^{1,\alpha}$ domains with $\alpha>1-1/p$, we refer to \cite{MR569375,MR4631039} for the Dirichlet problem and \cite{MR4387172} for the Neumann problem. The latter follows from the more general oblique derivative estimates there, since the unit normal to a $C^{1,\alpha}$ boundary belongs to $C^\alpha$. For this reduction, boundary regularity beyond graphical domain is used only to obtain these local estimates.

The proof is mainly by constructing a Whitney-type decomposition. Using the local graph representation, we cover the region near the boundary by ``lift-up cylinders'' obtained from a surface Whitney decomposition away from $\Sigma$, and cover the remaining region by interior, ``non-tangential'' cubes relative to $\Sigma$. The desired estimate follows from applying local $W^{2,p}$ estimates on each cube, noting they are now away from $\Sigma$.

\begin{proof}
We first consider a cube $Q$ centered at $x_0\in\Sigma$. By choosing
$r_0$ sufficiently small, after an orthogonal change of coordinates centered
at $x_0$, we have
\[
 \Omega\cap \kappa_0 Q
 =\{(x',x_n)\in\kappa_0 Q:x_n>\psi(x')\},
\]
where $\kappa_0>1$ is a sufficiently large fixed constant and $\psi$ is
Lipschitz. Let $Q'$ be the projection of $Q$ onto the coordinate plane and set
\[
 \Sigma':=\{x'\in \kappa_0 Q':(x',\psi(x'))\in\Sigma\}.
\]

We first estimate near the boundary. Fix $1<\kappa_1<\kappa_0$ sufficiently large and take a standard Whitney decomposition of $\kappa_1Q'\setminus\Sigma'$. Collect those cubes $I_j$ having nonempty intersection with $Q'$. These cubes have pairwise disjoint interiors and $Q'\setminus\Sigma'\subset\bigcup_j I_j$. Since the center of $Q'$ belongs to $\Sigma'$ and $\kappa_1$ is sufficiently large, they may be chosen so that,
writing
$\ell_j:=\ell(I_j)$,
\[
 \frac{1}{16\sqrt{n-1}}\dist(I_j,\Sigma')
 \leq \ell_j
 \leq \frac{1}{4\sqrt{n-1}}\dist(I_j,\Sigma').
\]
In particular, $4I_j\cap\Sigma'=\emptyset$. For details of this decomposition, see, for example, \cite[Appendix~J]{MR2445437}. Define the lift-up map
\[
 \Phi(y',t):=(y',\psi(y')+t).
\]
The map $\Phi$ is bi-Lipschitz, with
$\dist(\Phi(y',t),\partial\Omega)\simeq t$. Put
\[
 T_j:=\Phi(I_j\times(0,\ell_j)),
 \qquad
 T_j^*:=\Phi(2I_j\times(0,2\ell_j)).
\]
Since $4I_j\cap\Sigma'=\emptyset$, we have
$\delta_\Sigma(x)\simeq\ell_j$ for all $x\in T_j^*$. By \eqref{eq:local-W2p-away-Sigma},
\begin{equation}\label{est-bdycube}
 \int_{T_j}|D^2u|^p
 \leq C\int_{T_j^*}
 \left(|f|^p+\delta_\Sigma^{-p}|Du|^p\right).
\end{equation}

\begin{figure}
\begin{tikzpicture}[scale=1, line cap=round, line join=round]

\tikzset{
  boundary/.style={black, line width=1.25pt},
  coneedge/.style={red!85!black, line width=1.35pt},
  conefill/.style={red!12, draw=none},
  liftfill/.style={blue!10, draw=none},
  liftedge/.style={blue!80!black, line width=1.2pt},
  whitney/.style={draw=purple!80!black, line width=1.2pt},
  label/.style={font=\small},
}

\coordinate (P) at (0,0);
\coordinate (BL) at (-7,-2.1);
\coordinate (BR) at ( 7,-2.8);

\draw[boundary] (BL) -- (P) -- (BR);
\node[label,below left=2pt] at (BL) {$\cD$};
\node[label,below right=2pt] at (BR) {$\cN$};
\node[label] at (6.0,0) {$\Omega$};

\fill (P) circle (1.3pt);
\node[label,below=3pt] at (P) {$\Sigma$};

\coordinate (CL) at (-7.2, 1.8);
\coordinate (CR) at ( 6.0, 1.2);

\fill[conefill] (P) -- (CL) -- (CR) -- cycle;
\draw[coneedge] (P) -- (CL);
\draw[coneedge] (P) -- (CR);
\node[label,red!85!black] at (3.2,1.4) {$\mathcal{G}_\alpha$};


\coordinate (D1) at ($(P)!0.60!(BL)$);
\coordinate (D2) at ($(P)!0.40!(BL)$);
\draw[liftedge] (D1) -- (D2);
\node[label,blue!80!black,below left=2pt] at ($(D1)!1!(D2)$) {$I_j$};

\coordinate (D1up) at ($ (D1)!1!90:(D2) $);
\coordinate (D2up) at ($ (D2)!1!-90:(D1) $); 

\fill[liftfill] (D1) -- (D2) -- (D2up) -- (D1up) -- cycle;
\draw[liftedge] (D1) -- (D2) -- (D2up) -- (D1up) -- cycle;
\node[label,blue!80!black] at ($(D1up)!0.5!(D2)$) {$T_j$};

\coordinate (N1) at ($(P)!0.3!(BR)$);
\coordinate (N2) at ($(P)!0.2!(BR)$);
\draw[liftedge] (N1) -- (N2);

\coordinate (N1up) at ($ (N1)!1!-90:(N2) $);
\coordinate (N2up) at ($ (N2)!1!90:(N1) $);

\fill[liftfill] (N1) -- (N2) -- (N2up) -- (N1up) -- cycle;
\draw[liftedge] (N1) -- (N2) -- (N2up) -- (N1up) -- cycle;


\def\q{1}

\coordinate (Q1c) at ($(D1up)!0.55!(D2up) + (0, 0.35)$);  
\draw[whitney] ($(Q1c)+(-0.8*\q,-0.8*\q)$) rectangle ($(Q1c)+(0.8*\q,0.8*\q)$);
\node[label,purple!80!black,above=2pt] at (Q1c) {$\widetilde Q_j$};

\coordinate (Q2c) at ($(N1up)!0.2!(N2up)+(0.2,0.2)$);
\draw[whitney] ($(Q2c)+(-0.5*\q,-0.5*\q)$) rectangle ($(Q2c)+(0.5*\q,0.5*\q)$);

\coordinate (Q2c) at ($(-0.4,0.6)$);
\draw[whitney] ($(Q2c)+(-0.3*\q,-0.3*\q)$) rectangle ($(Q2c)+(0.3*\q,0.3*\q)$);

\end{tikzpicture}
\caption{Decomposition of $\Omega$}
\label{Figure1}
\end{figure}

Next, we estimate on the ``non-tangential region''. Define the non-tangential region
relative to $\Sigma$ by
\[
 \mathcal G_\alpha
 :=\{x\in\Omega:\delta_\Sigma(x)
 \leq(1+\alpha)\dist(x,\partial\Omega)\},
\]
where $\alpha>0$ will be chosen later. Take a standard interior Whitney
decomposition $\{\widetilde Q_k\}$ of $\Omega$ and collect those cubes with
\[
 \widetilde Q_k\cap\mathcal G_\alpha\cap Q\neq\emptyset.
\]
For each such cube,
\begin{equation}\label{est-intcube}
 \delta_\Sigma(x)
 \simeq_\alpha \dist(x,\partial\Omega)
 \simeq \ell(\widetilde Q_k),
 \qquad x\in2\widetilde Q_k,
\end{equation}
where $ A \simeq_\alpha B$ means $C_1 B \leq A \leq C_2 B$ and $C_1$ and $C_2$ are constants depending on $\alpha$. Indeed, the lower bound follows from
$\delta_\Sigma\geq\dist(\,\cdot\,,\partial\Omega)$. For the upper bound,
choose $y\in\widetilde Q_k\cap\mathcal G_\alpha$. Then, for
$x\in2\widetilde Q_k$,
\[
 \delta_\Sigma(x)
 \leq |x-y|+\delta_\Sigma(y)
 \leq C\ell(\widetilde Q_k)
 +(1+\alpha)\dist(y,\partial\Omega)
 \leq C_\alpha\ell(\widetilde Q_k).
\]
The interior estimate therefore yields
\[
 \int_{\widetilde Q_k}|D^2u|^p
 \leq C\int_{2\widetilde Q_k}
 \left(|f|^p+\delta_\Sigma^{-p}|Du|^p\right).
\]

We claim that $\Omega\cap Q$ is covered by the union of boundary lift-up cubes $T_j$ and the selected interior Whitney cubes $\widetilde Q_k$ (see Figure \ref{Figure1}). To see this,
take $x=\Phi(y',t)\in Q\cap\Omega$ which does not belong to any $T_j$, and let
\[
 \mathcal{P}(x):=\Phi(y',0).
\]
If $\mathcal{P}(x)\in\Sigma$, then
\[
 \delta_\Sigma(x)\leq |x-\mathcal{P}(x)|
 \leq C\dist(x,\partial\Omega).
\]
If $\mathcal{P}(x)\notin\Sigma$, let $I_j$ contain $y'$. Since $x\notin T_j$, we have
$t>\ell_j$. The Whitney property gives
\[
 \delta_\Sigma(\mathcal{P}(x))\leq C\ell_j<Ct,
\]
and hence again
\[
 \delta_\Sigma(x)
 \leq |x-\mathcal{P}(x)|+\delta_\Sigma(\mathcal{P}(x))
 \leq C\dist(x,\partial\Omega).
\]
Thus $x\in\mathcal G_\alpha$ when $\alpha$ is sufficiently large, and the
interior Whitney cube containing $x$ is one of the selected cubes.

The families $\{T_j^*\}$ and $\{2\widetilde Q_k\}$ have uniformly bounded
overlap, and all these sets are contained in a fixed dilation of $Q$. Adding
the preceding estimates gives
\begin{align} \label{eq:estimate}
     \int_{Q}|D^2u|^p
 \leq C\int_{\kappa_0Q}
 \left(|f|^p+\delta_\Sigma^{-p}|Du|^p\right).
\end{align}

\medskip

It remains to consider a general cube $Q$, not necessarily centered at $\Sigma$. If
$2Q\cap\Sigma=\emptyset$, the desired estimate follows directly from
\eqref{eq:local-W2p-away-Sigma}. Otherwise, choose $y_0\in2Q\cap\Sigma$ and a cube $Q^*$
centered at $y_0$, with side length comparable to $\ell(Q)$, such that
$Q\subset Q^*$. Applying the preceding estimate \eqref{eq:estimate} to $Q^*$, and then choosing
$\kappa$ sufficiently large so that $\kappa_0Q^*\subset\kappa Q$, proves
\eqref{eq:weighted-Whitney}.
\end{proof}

\section{Estimate in endpoint Hardy spaces} \label{sec-hardy}

\subsection{A $W^{1,2+\epsilon}$ estimate}
Consider \eqref{prob:poisson} with $f \in L^p(\Omega)$ with $p > 2n/(n+2)$. By Sobolev embedding, $f \in W^{-1,2}(\Omega)$, it follows that there is a unique solution $u \in W^{1,2}_\cD(\Omega)$. In this section, under mild conditions on $\Omega$ and $\Gamma$, we prove a $W^{1,2+\epsilon}$ estimate.
\begin{lemma}\label{lem-260819-1105}
Let $\Omega \subset \bR^n$ be a bounded Lipschitz domain. Suppose that $(\cD, \cN)$ satisfies the corkscrew condition in Assumption \ref{asp-cork}. Then there exists some $\epsi > 0$, such that for every $p \in (2_*, (2+\epsi)_*)$, the unique solution $u \in W^{1,2}_\cD(\Omega)$ of \eqref{prob:poisson} satisfies $u \in W^{1,p^*}_\cD(\Omega)$ and
\begin{equation*}
    \|\nabla u\|_{L^{p^*}(\Omega)} \leq C\|f\|_{L^p(\Omega)}.
\end{equation*}
\end{lemma}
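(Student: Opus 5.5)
The plan is a Gehring-type reverse Hölder argument (Meyers' estimate) for the energy solution, with the right-hand side measured in $L^p$, $p=q_*$ for $q$ slightly above $2$. Here $2_*=2n/(n+2)$ and $p^*=np/(n-p)$ (with the usual interpretation when $n=2$), so $p\in(2_*,(2+\epsi)_*)$ means exactly $p^*\in(2,2+\epsi)$.

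First, since $p>2_*$, Sobolev embedding gives $f\in (W^{1,2}_\cD(\Omega))^*$. The Dirichlet corkscrew condition gives $\cD$ positive capacity, hence a Poincaré inequality on $W^{1,2}_\cD(\Omega)$. Lax--Milgram then produces the unique $u\in W^{1,2}_\cD(\Omega)$ with $\|\nabla u\|_{L^2}\le C\|f\|_{L^{2_*}}\le C\|f\|_{L^p}$.

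Next, we prove a local Caccioppoli inequality combined with Sobolev--Poincaré. Fix a ball $B=B_r(x_0)$ with $x_0\in\overline\Omega$ and $r$ small, and test \eqref{eq:poisson-weak} with $\eta^2(u-c)$, where $\eta$ is a cutoff for $B$ in $2B$. There are three cases.
\begin{enumerate}
\item If $2B\subset\Omega$ or $2B\cap\partial\Omega\subset\cN$, take $c$ to be the mean of $u$ on $2B_\Omega$. The test function is admissible because no Dirichlet constraint is seen. The Sobolev--Poincaré inequality on the Lipschitz set $2B_\Omega$ applies.
\item If $2B\cap\cD\ne\emptyset$, take $c=0$. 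Assumption~\ref{asp-cork} provides a boundary piece of $\cD$ of size comparable to $r$ inside $4B$. A Sobolev--Poincaré inequality for functions vanishing on a set of positive capacity proportional to the ball then holds with exponent $2_*$ on the right.
\item Balls near $\cD$ but not touching $2B$ are absorbed into the first case by enlarging.
\end{enumerate}
In all cases we get
\[
\Bigl(\fint_{B_\Omega}|\nabla u|^2\Bigr)^{1/2}\le C\Bigl(\fint_{4B_\Omega}|\nabla u|^{2_*}\Bigr)^{1/2_*}+C\,r\Bigl(\fint_{4B_\Omega}|f|^{2_*}\Bigr)^{1/2_*},
\]
where the $f$-term arises from $\int f\eta^2(u-c)$ by Hölder, Sobolev, and Young.

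Then we apply Gehring's lemma in the version with a right-hand side (Giaquinta--Modica) on the space of homogeneous type $\Omega$. Set $g=|\nabla u|^{2_*}$, exponent $2/2_*>1$, and $h=(r|f|)^{2_*}$. To avoid the scale factor $r$, use the Riesz potential form: replace $r(\fint|f|^{2_*})^{1/2_*}$ by the fractional maximal function $M_1 f$ taken at exponent $2_*$. A cleaner route is to note $r\|f\|\lesssim$ the local $L^{p}$ quantity via Hölder. Gehring yields $\nabla u\in L^{2+\epsi}$ with
\[
\|\nabla u\|_{L^{q}}\le C\|\nabla u\|_{L^2}+C\|M_{1,2_*}f\|_{L^q},\qquad q\in(2,2+\epsi).
\]
Finally, the fractional maximal operator maps $L^{q_*}\to L^{q}$ (Hardy--Littlewood--Sobolev, boundedness of $M_1$). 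Taking $q=p^*$ and combining with the energy bound gives $\|\nabla u\|_{L^{p^*}}\le C\|f\|_{L^p}$. Membership in $W^{1,p^*}_\cD$ follows since $u\in W^{1,2}_\cD$ and $\nabla u\in L^{p^*}$, by approximation on the bounded domain.

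The main obstacle is the uniform Sobolev--Poincaré inequality in balls meeting $\cD$ near the interface. This requires a capacity lower bound for $\cD\cap 4B$ from the corkscrew condition uniformly in scale, for instance via extension across the Lipschitz boundary and Maz'ya's capacitary Poincaré inequality. The secondary difficulty is handling the inhomogeneous term so that Gehring produces exactly the $L^{p^*}$ scaling; if the maximal-function version is awkward, I would instead first prove the reverse Hölder for the homogeneous problem and treat $f$ by splitting $f$ with a Riesz-potential/duality argument.
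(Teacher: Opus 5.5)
Your argument is correct in substance, but it is organized differently from the paper's. The paper never applies Gehring to $u$; it dualizes instead. For $F\in C^\infty_c(\Omega)$ it solves $-\Delta v_F=\operatorname{div}F$ with the mixed conditions. It then invokes the Meyers-type bound $\|\nabla v_F\|_{L^q}\le C\|F\|_{L^q}$ for $q$ in an interval $(p_1,p_1')$ around $2$, which rests on the same ingredients you use: Caccioppoli, the corkscrew Sobolev--Poincar\'e inequality, and Gehring. Finally it estimates $\int_\Omega\nabla u\cdot F=\int_\Omega f v_F$ by $\|f\|_{L^p}\|v_F\|_{L^{p'}}\le C\|f\|_{L^p}\|\nabla v_F\|_{L^{(p^*)'}}$, using the global Sobolev--Poincar\'e inequality on $W^{1,(p^*)'}_\cD$.

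What that buys is this: divergence-form data enter the Caccioppoli inequality without any power of $r$, so the textbook Gehring lemma applies verbatim. The gain from $L^p$ to $L^{p^*}$ is then a single global embedding. Your direct route must instead neutralize the factor $r$ in $r\bigl(\fint_{4B_\Omega}|f|^{2_*}\bigr)^{1/2_*}$. The fractional maximal function does this correctly: this quantity is at most $C\inf_{B}M_{1,2_*}f$, and $\|M_{1,2_*}f\|_{L^{p^*}}\le C\|f\|_{L^p}$ as soon as $p>2_*$. In exchange, you avoid the auxiliary dual problem and obtain a genuinely local reverse H\"older inequality with right-hand side, which the duality argument does not provide.

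A few repairs are needed.
\begin{enumerate}
\item The aside that H\"older gives a ``cleaner route'' does not work. Bounding by $r\bigl(\fint_{4B_\Omega}|f|^p\bigr)^{1/p}$ keeps the scale factor, so the result is still not a term Gehring's lemma accepts. Stay with $M_{1,2_*}f$.
\item For $n=2$ you have $2_*=1$. Then $\int f\eta^2(u-c)$ cannot be controlled by $\|f\|_{L^1}$, since $W^{1,2}\not\subset L^\infty$ in the plane. Use an exponent $t\in(1,p)$ for the $f$-term and $M_{1,t}$ in place of $M_{1,2_*}$.
\item The final claim $u\in W^{1,p^*}_\cD$ is not a mere approximation remark. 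Density of $C^\infty_\cD$ in $W^{1,2}_\cD\cap W^{1,p^*}$ needs an argument, for instance a Hardy-type inequality near $\overline{\cD}$. The paper, however, asserts this point just as briefly.
\end{enumerate}
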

Here, $p_* = np/(n+p)$ and $p^* = np/(n-p)$.

\medskip

This follows from a standard $W^{1,2+\epsilon}$ estimate for equation with divergence-form right-hand side, duality, and the Sobolev-Poincar\'e inequality. We give a sketch below.

\begin{proof}
    For any $F\in C^\infty_c(\Omega)$, let $v_F \in W^{1,2}_\cD(\Omega)$ solve the dual equation
\begin{equation*}
    -\Delta v = \operatorname{div}(F)\,\,\text{in}\,\,\Omega,
    \quad
    v=0\,\,\text{on}\,\,\cD,
    \quad
    \p v/\p \nu = 0\,\,\text{on}\,\,\cN.
\end{equation*}
A standard argument implies that there exists some $p_1<2$, such that
\begin{equation*}
    \| \nabla v_F\|_{L^q(\Omega)} \leq C \|F\|_{L^q(\Omega)},
    \quad
    \forall q \in (p_1,p_1')
\end{equation*}
where $p_1'=p_1/(p_1-1) > 2$. Here, the corkscrew condition is used in order to apply the Sobolev-Poincar\'e inequality. See for example, \cite[Lemma~3.4]{MR4261267}. Testing \eqref{prob:poisson} by $v_F$, we have
\begin{equation} \label{eqn-260922-0152}
    \int_\Omega \nabla u \cdot F
    =
    \int_\Omega f v_F.
\end{equation}
Since $F\in C^\infty_c(\Omega)$ is arbitrary, using \eqref{eqn-260922-0152} and the Sobolev-Poincar\'e inequality, we obtain the desired estimate.
\end{proof}

\subsection{Estimate with atom data}

In this section, we consider \eqref{prob:poisson} on $C^{1,\alpha}$ domains with $f=a$, an $(1,\infty)_\cD$-atom. Since $a \in L^\infty(\Omega) \subset W^{-1,2}_{\cD}(\Omega)$, there exists a unique weak solution $u \in W^{1.2}_\cD(\Omega)$.

\begin{proposition}
 \label{prop:atom-data}
 Let $\Omega\subset\bR^n$ be a bounded domain satisfying one of the following two conditions:
 \begin{enumerate}
    \item $\Omega \in C^{1,\alpha}$ for some $\alpha > 0$ with the Dirichlet--Neumann interface $\Gamma$ is $\theta_0$-flat for some small $\theta_0$;
    \item $\Omega$ is a Lipschitz polyhedron with a facewise Dirichlet--Neumann decomposition and $\Sigma$ being the full polyhedron skeleton.
\end{enumerate}
Let $u\in W_{\cD}^{1,2}(\Omega)$ solves \eqref{prob:poisson} with $f=a$, an adapted $(1,\infty)_{\cD}$-atom. Then
 \begin{equation}
  \|D^2u\|_{L^1(\Omega)}
  +\|\delta^{-1}\nabla u\|_{L^1(\Omega)}
  \leq C.
  \label{eq:atom-data-estimate}
 \end{equation}
 Here $\delta = \delta_\Gamma$ under condition (a) and $\delta = \delta_\Sigma$ under condition (b)
\end{proposition}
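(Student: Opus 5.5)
Let $a$ be an adapted $(1,\infty)_\cD$-atom associated with $B=B_r(x_0)$, and let $u$ be the energy solution. The plan is to use Proposition~\ref{prop:weighted-Whitney} with $p=1$ replaced by a slightly larger exponent. More precisely, we apply it with some exponent $p_1\in(1,4/3)$ in case (a), respectively $p_1$ close to $1$ in case (b), and then use H\"older's inequality on each cube. This reduces \eqref{eq:atom-data-estimate} to an $L^1$ bound on $\delta^{-1}\nabla u$ together with local $L^{p_1}$ control of $\delta^{-1}\nabla u$ near the support of $a$.

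Concretely, we cover $\Omega$ by a finite family of cubes of size $<r_0$, and near $B$ we use cubes of size $\sim r$. On a cube $Q$ of size $\simeq r$ meeting $4B$, Proposition~\ref{prop:weighted-Whitney} and H\"older give
\[
\int_Q|D^2u|\le C|Q|^{1-1/p_1}\Bigl(\|a\|_{L^{p_1}}+\|(\delta^{-1}+r^{-1})\nabla u\|_{L^{p_1}(\kappa Q)}\Bigr).
\]
The atom term is $O(1)$ by the size condition. For the gradient term, we use H\"older once more:
\[
\|\delta^{-1}\nabla u\|_{L^{p_1}}\le \|\delta^{-1}\|_{L^{t}}\|\nabla u\|_{L^{s}}, \qquad \tfrac1{p_1}=\tfrac1t+\tfrac1s.
\]
Lemma~\ref{lem-codim2-vol} gives $\|\delta^{-1}\|_{L^t(B_{Cr})}\lesssim r^{n/t-1}$ for $t<2$; in case (b) the skeleton is a finite union of codimension-two polytopes, so the same volume bound holds. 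The local estimate $\|\nabla u\|_{L^s(B_{Cr})}\lesssim r^{1-n+n/s}$ should follow from Lemma~\ref{lem-260819-1105} applied with data $a$ (rescaled). In case (a) one can use instead the optimal $W^{1,s}$ estimate for $s<4$, which is why $p<4/3$ is allowed. Since $1/t+1/s$ can be made $<1$ by taking $t<2$ and $s>2$, this yields the right scaling.

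Away from $B$, that is in the dyadic annuli $A_k=\Omega\cap(B_{2^{k+1}r}\setminus B_{2^kr})$, $u$ solves the homogeneous mixed problem. There we need decay: there is $\gamma>0$ such that
\[
\Bigl(\fint_{A_k}|\nabla u|^{s}\Bigr)^{1/s}\lesssim (2^kr)^{-1}\,r^{n}(2^kr)^{-n}\,2^{-k\gamma}.
\]
Combined with the weighted H\"older step on Whitney-size cubes covering $A_k$, this gives $\int_{A_k}(|D^2u|+\delta^{-1}|\nabla u|)\lesssim 2^{-k\gamma}$, which is summable in $k$.

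We obtain the decay by duality with H\"older estimates. We represent $\nabla u$ tested against $F$ supported in $A_k$ through the dual solution $v_F$ as in \eqref{eqn-260922-0152}, which gives $\int\nabla u\cdot F=\int_B a\,v_F$. We then split into two cases.
\begin{itemize}
\item If $2B\cap\cD=\varnothing$, the atom has mean zero, and the bound follows from a $C^{\gamma}$ estimate of $v_F$ on $B$, since $v_F$ is a homogeneous solution near $B$.
\item If $2B$ meets $\cD$, we use instead the boundary H\"older decay of $v_F$ from its zero Dirichlet data, via the corkscrew condition and De Giorgi--Nash--Moser for mixed problems: $|v_F|\lesssim (r/2^kr)^\gamma\sup|v_F|$ on $B$.
\end{itemize}
In both cases $\sup_{B_{2^{k-1}r}}|v_F|$ is controlled via local boundedness by $(2^kr)^{1-n/s'}\|F\|_{L^{s'}}$, using Lemma~\ref{lem-260819-1105} in dual form. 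The main obstacle is this decay step, in particular obtaining uniform boundary H\"older estimates for mixed problems near $\Gamma$ (respectively $\Sigma$), and checking that the exponents $t<2$, $s$, and $p_1$ can be chosen compatibly.

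If $B$ is large (comparable to $\operatorname{diam}\Omega$), the global bound $\|\nabla u\|_{L^2}\lesssim |B_\Omega|^{-1/2}$ suffices, together with the finitely many cubes of size $r_0$.
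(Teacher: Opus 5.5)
Your proposal follows the paper's proof essentially step for step. You use the Whitney reduction of Proposition~\ref{prop:weighted-Whitney} at an exponent slightly above $1$, then H\"older. Near the support, you pair $\delta^{-1}\in L^t$, $t<2$, against the global $L^s$ bound, $s>2$, from Lemma~\ref{lem-260819-1105}. On dyadic annuli, you get decay by duality and H\"older continuity of the dual solution, using the mean-zero/Dirichlet dichotomy of adapted atoms (the paper's constant $c_B$). The only variation is bookkeeping. You test against $F\in L^{s'}(A_k)$ and obtain $L^s$ decay directly. The paper tests against $F\in L^2(A_k)$ and then upgrades the $L^2$ decay by a local reverse H\"older inequality. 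The two are equivalent.

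A few slips need correcting.
\begin{enumerate}
\item In case (a), Proposition~\ref{prop:weighted-Whitney} at exponent $p_1$ requires $\alpha>1-1/p_1$, and here $\alpha>0$ is arbitrary. So $p_1$ must be taken close to $1$, as in \eqref{eq:atom-exponent-choice}, not anywhere in $(1,4/3)$.
\item Your remark about the $W^{1,s}$, $s<4$, estimate is beside the point. That estimate is for homogeneous solutions, which $u$ is not near $B$. The threshold $4/3$ enters only in the interpolation of Section~\ref{sec-interpolation}.
\item Your displayed annular bound $(2^kr)^{-1-n}r^n2^{-k\gamma}$ is mis-scaled: it is not dimensionally consistent. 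What your duality argument actually yields is $\bigl(\fint_{A_k}|\nabla u|^s\bigr)^{1/s}\lesssim 2^{-k\gamma}(2^kr)^{1-n}$, and this is what gives $\int_{A_k}(|D^2u|+\delta^{-1}|\nabla u|)\lesssim 2^{-k\gamma}$.
\item In the mean-zero case, local boundedness plus Poincar\'e at scale $2^kr$ controls only $\sup_{B_{2^{k-1}r}}|v_F-c|$ when no Dirichlet portion is nearby, not $\sup|v_F|$. You should state the bound for the oscillation, which is all that the cancellation of $a$ requires.
\end{enumerate}
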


\begin{proof}
Let $B=B_R(x_0)$ be the ball associated with $a$. By a simple covering, we may assume that $R$ is smaller than the fixed coordinate radius.

Since $|B\cap\Omega|\simeq R^n$ and $\|a\|_{L^\infty(\Omega)} \leq |B\cap\Omega|^{-1} \leq CR^{-n}$, by Lemma \ref{lem-260819-1105}, there exists $\epsi>0$, such that for every $p \in (2,2+\epsi)$,
\begin{equation}
 \|\nabla u\|_{L^p(\Omega)}
 \leq C\|a\|_{L^{p_*}(\Omega)}
 \leq C\|a\|_{L^\infty(\Omega)} |B\cap\Omega|^{1/p_*}
 \leq C|B\cap\Omega|^{-1 + 1/p_*}
 \leq C R^{1+n/p-n}.
 \label{eq:atom-gradient-meyers}
\end{equation}

Fix one such $p$. Choose $\epsi_0\in(0,2-p')$ and then
$\beta\in(p',2-\epsi_0)$ sufficiently close to $p'$ so that, with
\begin{equation}
 \frac1q:=\frac1p+\frac1\beta<1,
 \qquad 1-\frac1q<\alpha.
 \label{eq:atom-exponent-choice}
\end{equation}
Under condition (a), we decrease $\theta_0$, depending on $\epsi_0$, such that Lemma \ref{lem-codim2-vol} holds. Under condition (b), the polyhedral structure of $\Sigma$ gives $\int_{B_r(x_0)}\delta_\Sigma(x)^{-\beta}\,dx
 \leq C_\beta r^{n-\beta}$, for all $0<\beta<2$.
Thus the same argument applies with $\delta=\delta_\Sigma$, without a flatness assumption.

\medskip

\noindent\textbf{Step 1: Estimate near the support.}
By H\"older's inequality, we have
\begin{equation} \label{eqn-260820-0831}
     \|\delta^{-1}\nabla u\|_{L^q(B_{8R}(x_0)\cap\Omega)}
    \leq 
    \|\delta^{-1}\|_{L^\beta(B_{8R}(x_0)\cap\Omega)} \|\nabla u\|_{L^p(B_{8R}(x_0)\cap\Omega)}.
\end{equation}
From $\beta < 2-\epsi_0$, Lemma \ref{lem-codim2-vol}, and \eqref{eq:atom-gradient-meyers}, we further have
\begin{align*}
    \text{RHS of \eqref{eqn-260820-0831}}
    \leq
    C R^{n/\beta-1}R^{1+n/p-n}
 =C R^{n/q-n}.
\end{align*}
Since $p>q$, $\|\nabla u\|_{L^q(B_{8R}(x_0)\cap\Omega)}
 \leq C R^{n/q-n/p}\|\nabla u\|_{L^p(\Omega)}$.
It follows that
\begin{equation}
 \|\delta^{-1}\nabla u\|_{L^q(B_{8R}(x_0)\cap\Omega)}
 +R^{-1}\|\nabla u\|_{L^q(B_{8R}(x_0)\cap\Omega)}
 \leq C R^{n/q-n}.
 \label{eq:atom-near-Lq}
\end{equation}
The weighted Whitney estimate in Proposition \ref{prop:weighted-Whitney} therefore gives
\[
 \|D^2u\|_{L^q(B_{4R}(x_0)\cap\Omega)}
 \leq 
 \|(\delta^{-1}+ R^{-1})\nabla u\|_{L^q(B_{8R}(x_0)\cap\Omega)} +\|a\|_{L^q(\Omega)}
 \leq
 C R^{n/q-n}.
\]
Applying H\"older's inequality again, we have
\begin{equation}
 \int_{B_{4R}(x_0)\cap\Omega}
 \bigl(|D^2u|+\delta^{-1}|\nabla u|\bigr)\,dx
 \leq C.
 \label{eq:atom-near-L1}
\end{equation}

\noindent\textbf{Step 2. $L^2$ decay for gradient.}
Set $r_k=2^kR$ and $A_k=\bigl(B_{r_{k+1}}(x_0)\setminus B_{r_k}(x_0)\bigr)$. We prove
\begin{equation}
 \|\nabla u\|_{L^2(A_k)}
 \leq C2^{-k\gamma}r_k^{1-n/2}.
 \label{eq:atom-L2-decay}
\end{equation}

For this, take $F\in L^2(A_k;\bR^n)$ with $\|F\|_{L^2(A_k)}=1$ and let
$\phi_F\in W_{\cD}^{1,2}(\Omega)$ solve
\[
 -\Delta \phi_F = -\operatorname{div}(F)\,\,\text{in}\,\,\Omega,
 \quad
 \phi_F = 0\,\,\text{on}\,\,\cD,
 \quad
 \p \phi_F/\p \vec{\nu} = F\cdot\nu\,\,\text{on}\,\,\cN.
\]
We have $\|\nabla\phi_F\|_{L^2(\Omega)}\leq C$.

Since $\phi$ satisfies the Laplace equation with homogeneous mixed boundary condition in $B \cap \Omega$, the local H\"older estimate (cf., \cite[Theorem~3.1]{MR3034453}) together with the Poincar\'e inequality gives, for some $\gamma>0$,
\begin{align*}
 \fint_{B\cap\Omega}|\phi_F-c_B|\,dx
 &\leq C R^\gamma [\phi_F]_{C^\gamma(2 B\cap\Omega)}
 \\&\leq C\left(\frac R{r_{k-1}}\right)^\gamma r_{k-1}
 \left(\fint_{B_{r_{k-1}}(x_0)\cap\Omega}|\nabla\phi_F|^2\,dx\right)^{1/2}\\
 &\leq C2^{-k\gamma}r_k^{1-n/2} \|\nabla \phi_F\|_{L^2(\Omega)} 
 \leq C2^{-k\gamma}r_k^{1-n/2},
\end{align*}
where $c_B=0$ if $2B\cap\cD\neq\emptyset$, and $c_B = \fint_{B\cap \Omega} \phi_F$ otherwise.
Since $\|a\|_{L^{\infty}(\Omega)} \leq|B\cap\Omega|^{-1}$ and the average of $a$ equals to zero if $2B\cap \cD = \emptyset$, we have
\[
 \left|\int_\Omega a\phi_F\,dx\right|
 =\left|\int_\Omega a(\phi_F-c_B)\,dx\right|
 \leq C2^{-k\gamma}r_k^{1-n/2}.
\]
By duality, we have
\[
 \int_{A_k}F\cdot\nabla u\,dx
 =\int_\Omega F\cdot \nabla u\,dx
 =\int_\Omega\nabla\phi_F\cdot\nabla u\,dx
 =\int_\Omega a\phi_F\,dx.
\]
Taking the supremum over $F$ proves \eqref{eq:atom-L2-decay}.

\medskip
\noindent \textbf{Step 3. Decay of $D^2u$ and $\delta^{-1}\nabla u$.} For $k \geq 2$, denote $A_k^* = \cup_{j=-1}^1 A_{k+j}$.

Recall $p \in (2,2+\epsi)$. By a standard local reverse H\"older estimate and \eqref{eq:atom-L2-decay}, we obtain
\begin{equation}
 (\fint_{A_k} |\nabla u|^p)^{1/p}
 \leq C (\fint_{A_k^*}  |\nabla u|^2)^{1/2}
 \leq C2^{-k\gamma}r_k^{1-n}.
 \label{eq:atom-Lp-decay}
\end{equation}

From this, an application of H\"older's inequality and Lemma \ref{lem-codim2-vol} as in Step 1 gives
\begin{equation}
 \|\delta^{-1}\nabla u\|_{L^q(A_k)}
 +r_k^{-1}\|\nabla u\|_{L^q(A_k)}
 \leq C2^{-k\gamma}r_k^{n/q-n}.
 \label{eq:atom-annular-Lq}
\end{equation}
Since $a=0$ on $A_k^*$, the weighted Whitney estimate in Proposition \ref{prop:weighted-Whitney} gives
\[
 \|D^2u\|_{L^q(A_k)}
 \leq C2^{-k\gamma}r_k^{n/q-n}.
\]

Therefore, H\"older's inequality implies, for any $k \geq 2$,
\begin{equation}
 \int_{A_k}
 \bigl(|D^2u|+\delta^{-1}|\nabla u|\bigr)\,dx
 \leq C2^{-k\gamma}.
 \label{eq:atom-annular-L1}
\end{equation}

Adding this up for $k \geq 2$ and \eqref{eq:atom-near-L1}, we reach the desired estimate.
\end{proof}

\subsection{Solvability with Hardy data}

From Proposition \ref{prop:atom-data}, the solvability when $f \in H^1_\cD$ easily follows. Indeed, let $f = \sum_j \lambda_j a_j$ be a atom decomposition with $\sum_j |\lambda_j| \leq 2 \|f\|_{H^1_\cD}$. Since $f_n := \sum_{j\leq n}\lambda_j a_j \in L^\infty(\Omega)$, we can solve \eqref{prob:poisson} for a solution $u_n \in W^{1,2}_\cD(\Omega)$. By Proposition \ref{prop:atom-data}, we have $u_n \in W^{2,1}(\Omega)$ forms a Cauchy sequence under the usual $W^{2,1}$ norm. Let $u \in W^{2,1}$ be the limit. Clearly, $u$ is a weak solution to \eqref{prob:poisson} and satisfies the desired estimate.

\section{Uniqueness}
\label{sec-uniqueness}

In this section, we establish uniqueness in the endpoint space $W^{2,1}(\Omega)$ that works both on $C^{1,\alpha}$ domains with mild assumptions on $\Gamma$ and Lipschitz polyhedrons with facewise decomposition. Since a solution in this space need not belong to the energy space $W^{1,2}_{\cD}(\Omega)$, the standard energy argument does not apply directly.

\begin{proposition}
 \label{prop:W21-uniqueness}
 Let $\Omega\subset\bR^n$ be a bounded, connected Lipschitz domain. Suppose that the decomposition $(\cD, \cN)$ satisfies a corkscrew condition as in Assumption \ref{asp-cork} and $\Gamma$ satisfies an integral bound in \eqref{eqn-260819-1032} with a sufficiently small $\epsi >0$. If
 \[
  u\in W^{2,1}(\Omega)\cap W^{1,1}_{\cD}(\Omega)
 \]
 solves \eqref{prob:poisson} with $f=0$, then $u\equiv0$.
\end{proposition}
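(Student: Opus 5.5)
Since $u$ need not lie in the energy space, I would argue by duality. I test $u$ against solutions of the adjoint problem with smooth data, and integrate by parts using a cutoff near $\Gamma$. The integral bound \eqref{eqn-260819-1032} and the extra integrability of the dual solution will make the cutoff errors vanish.

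Fix $g\in C^\infty_c(\Omega)$ and let $v\in W^{1,2}_\cD(\Omega)$ be the energy solution of $-\Delta v=g$ with the same mixed conditions. By Lemma~\ref{lem-260819-1105}, $\nabla v\in L^{s}(\Omega)$ for some $s>2$. Away from $\Gamma$, $v$ is locally $W^{2,r}$ up to the boundary for large $r$, by the local estimates \eqref{eq:local-W2p-away-Sigma} together with bootstrapping; in particular $v$ and $\nabla v$ are bounded on compact subsets of $\overline\Omega\setminus\Gamma$. I also want a pointwise bound $|\nabla v(x)|\le C\,\delta_\Gamma(x)^{-1}\bigl(\fint_{B_{\delta_\Gamma(x)/2}(x)\cap\Omega}|\nabla v|^2\bigr)^{1/2}$. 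Combined with the H\"older estimate, this gives $|v(x)-v(x_\Gamma)|\lesssim \delta_\Gamma^{\gamma}$ near $\Gamma$, and hence $|\nabla v|\lesssim \delta_\Gamma^{\gamma-1}$ there. On $\Gamma$ itself $v=0$, since $\Gamma\subset\overline\cD$ and $v$ is H\"older continuous; therefore $|v|\lesssim\delta_\Gamma^\gamma$ near $\Gamma$.

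Take cutoffs $\eta_\rho$ with $\eta_\rho=0$ on $\{\delta_\Gamma<\rho\}$, $\eta_\rho=1$ on $\{\delta_\Gamma>2\rho\}$, and $|\nabla\eta_\rho|\le C/\rho$. On the support of $\eta_\rho$ both $u$ and $v$ are regular enough to integrate by parts twice: $u\in W^{2,1}$ has traces of $u$ and $\nabla u$ in $L^1(\partial\Omega)$, and $v$ is $W^{2,r}$ there. On $\cD$ we have $u=v=0$, and on $\cN$ we have $\partial_\nu u=\partial_\nu v=0$, so the boundary terms vanish. Since $\Delta u=0$, this yields
\[
\int_\Omega \eta_\rho\, u\, g
=\int_\Omega \nabla\eta_\rho\cdot\bigl(v\nabla u-u\nabla v\bigr).
\]
For the first error term, $|v|\lesssim\rho^\gamma$ on $\{\rho<\delta_\Gamma<2\rho\}$, so
\[
\Bigl|\int \nabla\eta_\rho\cdot v\nabla u\Bigr|\lesssim \rho^{\gamma-1}\int_{\{\delta_\Gamma<2\rho\}}|\nabla u|,
\]
and I need $\int_{\{\delta_\Gamma<2\rho\}}|\nabla u| = o(\rho^{1-\gamma})$. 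For this I would use $\nabla u\in W^{1,1}$: since $\nabla u$ vanishes in the tangential direction on $\cD$, a Hardy-type inequality in the normal-to-$\Gamma$ direction should give $\int_{\{\delta_\Gamma<2\rho\}}|\nabla u|\le \rho\int_{\{\delta_\Gamma<C\rho\}}|D^2u| + (\text{lower order})$. That is $o(\rho)$, which is more than enough. For the second error term, $u$ vanishes on $\cD$, and the corkscrew condition gives a Poincar\'e inequality on the shells $\{\delta_\Gamma\sim\rho\}$ localized to $\rho$-balls: $\int_{\{\delta_\Gamma<2\rho\}}|u|\lesssim \rho\int_{\{\delta_\Gamma<C\rho\}}|\nabla u|\lesssim o(\rho^2)$. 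Since $|\nabla v|\lesssim\rho^{\gamma-1}$ there, the error is $\lesssim \rho^{\gamma-2}\, o(\rho^2)\to0$. The integral condition \eqref{eqn-260819-1032} enters by controlling the number of $\rho$-balls covering $\{\delta_\Gamma<2\rho\}$ (roughly $\rho^{2-n}$, i.e.\ the neighbourhood has volume $\lesssim\rho^{2}$), so that the localized estimates can be summed.

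Letting $\rho\to0$ gives $\int_\Omega u g=0$ for every $g\in C^\infty_c(\Omega)$, hence $u\equiv0$. The main obstacle is the quantitative smallness of $u$ and $\nabla u$ in the $\rho$-neighbourhood of $\Gamma$, using only $u\in W^{2,1}$. If the Hardy-type argument above is insufficient, my fallback is to avoid pointwise bounds entirely. I would instead use $\nabla v\in L^{s}$ with $s>2$, H\"older's inequality, and the volume bound $|\{\delta_\Gamma<2\rho\}\cap B|\lesssim \rho^{2-\epsilon}$ from \eqref{eqn-260819-1032}. With the absolute continuity of $\int|D^2u|$ and $\int|\nabla u|$, this would show the error terms vanish along a subsequence $\rho_k\to0$, chosen via an averaging (coarea) argument in $\rho$.
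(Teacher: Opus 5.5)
\textbf{Verdict: genuine gap.} Your duality scheme is reasonable in outline, but it depends on a regularity property of the dual solution $v$ away from $\Gamma$ that fails in the generality of the proposition.

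You claim that away from $\Gamma$, $v$ is $W^{2,r}$ up to the boundary with bounded gradient. From this you derive $|\nabla v|\lesssim\delta_\Gamma^{\gamma-1}$. Both the Green identity and your bound on $\int\nabla\eta_\rho\cdot u\nabla v$ depend on this.

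However, \eqref{eq:local-W2p-away-Sigma} is only available in two cases: on $C^{1,\alpha}$ domains, or on polyhedra away from the full skeleton $\Sigma$, which may be strictly larger than $\Gamma$. On a general Lipschitz domain there is no boundary gradient bound at all. Even on a polyhedron with $n\ge3$, an edge where two Dirichlet (or two Neumann) faces meet at a dihedral angle $\omega>\pi$ can end at a vertex on $\Gamma$. An $L$-shaped prism with Neumann top face is an example. Near such an edge, $|\nabla v|$ is in general unbounded, like $r^{\pi/\omega-1}$, and this happens on the shells $\{\rho<\delta_\Gamma<2\rho\}$ themselves.

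On $C^{1,\alpha}$ domains, $W^{2,r}$ also holds only for $\alpha>1-1/r$. There, however, the scale-invariant $C^{1,\alpha}$ gradient bound you actually use is true. So your argument essentially covers Theorem~\ref{thm:poisson-smooth} but not Theorem~\ref{thm:poisson-polyhedron}.

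Your fallback does not close the gap for $n\ge3$. H\"older with $\nabla v\in L^s$, $s>2$, would need $\rho^{-1}\|u\|_{L^{s'}(\{\delta_\Gamma<2\rho\})}\to0$ with $s'$ near $2$. Membership $u\in W^{2,1}\subset L^{n/(n-2)}$ gives nothing of that kind.

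Two smaller points:
\begin{itemize}
\item $\nabla u$ does not vanish on $\p\Omega$, so the ``Hardy-type'' reasoning is off. The bound $\int_{\{\delta_\Gamma<2\rho\}}|\nabla u|=O(\rho)$ is nevertheless true: integrate along the graph direction and use the $L^1(\p\Omega)$ trace of $\nabla u$.
\item You do not need to count $\rho$-balls. A cover by balls centred on $\Gamma$ with bounded overlap suffices.
\end{itemize}

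The paper never analyses a dual solution. Since $u$ and $\nabla u$ are harmonic and lie in $W^{1,1}(\Omega)$, Dahlberg's area-integral theorem plus a Whitney decomposition give $\mathbb{N}(u),\mathbb{N}(\nabla u)\in L^1(\p\Omega)$. The traces then coincide with nontangential limits, and the $L^1$ uniqueness theorem of \cite{MR3034453} applies. The integral bound \eqref{eqn-260819-1032} enters as a hypothesis of that theorem.

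Your route can be repaired using only interior information on $v$:
\begin{itemize}
\item Global H\"older continuity of $v$ (as in Step~2 of Proposition~\ref{prop:atom-data}) and interior estimates give $|\nabla v(x)|\lesssim\dist(x,\p\Omega)^{\gamma-1}$.
\item Since $\int_\Omega|w|\dist(\cdot,\p\Omega)^{\gamma-1}\le C\|w\|_{W^{1,1}(\Omega)}$, the product $\nabla u\cdot\nabla v$ is integrable. The Green identity with $\eta_\rho$ then follows by approximation in that weighted norm.
\item For the $u\nabla v$ term, bound $|u|$ in the layer by its boundary trace plus $\int|\p_n u|$ along the graph direction. The trace of $u$ lies in $W^{1,1}(\p\Omega)$ and vanishes on $\cD$. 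A surface Poincar\'e inequality, using the corkscrew condition, then gives $\int_{\p\Omega\cap\{\delta_\Gamma<C\rho\}}|u|\,d\sigma=O(\rho)$.
\end{itemize}
Both error terms are then $O(\rho^\gamma)$. This gives a more hands-on proof than the paper's, but only with exactly the weighted boundary-layer treatment your proposal skips.
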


For the proof, we reduce it to the uniqueness for the $L^1$-mixed problem established in \cite[Theorem~1.1(a)]{MR3034453}. That is, we show that the non-tangential maximal functions of $u$ and $\nabla u$ belong to $L^1(\partial\Omega)$.
More precisely, for $x\in\p\Omega$, let 
\[
 \mathbb{N}(h)(x):=\sup_{y\in\gamma(x)}|h(y)|,
\]
where $\gamma(x):=\{y \in \Omega: |y-x| < (1+\alpha_0) \operatorname{dist}(y,\p\Omega)\}$ is the non-tangential cone with a fixed apperture $\alpha_0 > 0$. By \cite[Theorem~1.1(a)]{MR3034453}, to show $u\equiv 0$, it suffices to show $\mathbb{N}(u) , \mathbb{N} (\nabla u) \in L^1(\p\Omega)$. This follows from the following lemma.
\begin{lemma}
 \label{lem:W11-nontangential}
 Let $\Omega$ be a bounded Lipschitz domain. If
 $h\in W^{1,1}(\Omega)$ is harmonic in $\Omega$, then
 \begin{equation}
  \|\mathbb{N}(h)\|_{L^1(\p\Omega)}
  \leq C\|h\|_{W^{1,1}(\Omega)}.
  \label{eq:W11-nontangential}
 \end{equation}
\end{lemma}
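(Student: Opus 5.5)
We argue through the interior mean value property together with a Whitney-type decomposition of the cone region, in the same spirit as Proposition~\ref{prop:weighted-Whitney}. For $y\in\gamma(x)$ put $d(y):=\dist(y,\p\Omega)$. The ball $B_y:=B_{d(y)/2}(y)$ lies in $\Omega$, and since $h$ is harmonic,
\[
 |h(y)|\leq \fint_{B_y}|h|\,dz .
\]
This gives a pointwise bound only in terms of averages of $h$, with no use yet of $\nabla h$. The average over a ball of radius $\simeq t$ is not integrable in $t$ uniformly, so a crude bound does not suffice. Instead we will use the gradient to connect $B_y$ to a fixed interior region along the cone.

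The first step is a localization. Cover $\p\Omega$ by finitely many coordinate cylinders in which $\Omega=\{x_n>\psi(x')\}$ with $\psi$ Lipschitz. Points of $\gamma(x)$ with $d(y)\geq r_1$, for a small fixed $r_1$, lie in a compact set $K\Subset\Omega$. There interior estimates for harmonic functions give $\sup_K|h|\leq C\|h\|_{L^1(\Omega)}$, and that part of $\mathbb N(h)$ contributes at most $C|\p\Omega|\,\|h\|_{L^1}$. So it suffices to bound the truncated maximal function $\mathbb N_{r_1}(h)(x):=\sup\{|h(y)|: y\in\gamma(x),\ d(y)<r_1\}$ inside one cylinder. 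Flatten the boundary by the lift-up map $\Phi(y',t)=(y',\psi(y')+t)$. Up to constants, a cone point $y$ with $d(y)\simeq t$ then corresponds to $(y',t)$ with $|y'-x'|\lesssim t$.

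The key step is a fundamental-theorem-of-calculus argument in the vertical variable. For a point $z=\Phi(z',s)$ with $s<r_1$, write
\[
 h(\Phi(z',s)) = h(\Phi(z',r_1)) - \int_s^{r_1}\p_t\bigl[h(\Phi(z',t))\bigr]\,dt ,
\]
so that
\[
 |h(\Phi(z',s))|\leq |h(\Phi(z',r_1))|+C\int_0^{r_1}|\nabla h(\Phi(z',t))|\,dt=:G(z').
\]
This holds for a.e.\ $z'$, because $h\circ\Phi\in W^{1,1}$ is absolutely continuous on almost every vertical line, and $h$ is smooth in the interior anyway. The first term is bounded by $C\|h\|_{L^1}$ by the interior estimate. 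Now take $y\in\gamma(x)$ with $d(y)\simeq t<r_1$. The mean value property gives $|h(y)|\leq\fint_{B_y}|h|$, and $B_y$ is comparable to a box $\Phi(\{|z'-x'|\lesssim t\}\times(t/C,Ct))$. Hence
\[
 |h(y)|\leq C\fint_{|z'-x'|\leq Ct}G(z')\,dz'\leq C\,\mathcal M G(x'),
\]
where $\mathcal M$ is the Hardy--Littlewood maximal function on $\bR^{n-1}$. Thus $\mathbb N_{r_1}(h)\lesssim \mathcal M G$, and $\|G\|_{L^1}\leq C\|h\|_{W^{1,1}}$ by Fubini and the bi-Lipschitz property of $\Phi$.

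The main obstacle is that $\mathcal M$ is not bounded on $L^1$, so this last bound must be refined. We will exploit harmonicity more strongly. Apply the mean value property to the vertical primitive rather than to $|h|$ itself: $\nabla h$ is also harmonic, and $|\nabla h(w)|\leq \fint_{B_w}|\nabla h|$. The plan is to avoid the maximal function by bounding the full trajectory through a cone-type integral. Using the mean value property at each height, we get
\[
 \sup_{y\in\gamma(x),\,d(y)<r_1}|h(y)|\leq C\|h\|_{L^1}+C\int_{\Gamma_{\kappa}(x)}|\nabla h(w)|\,d(w)^{1-n}\,dw,
\]
where $\Gamma_\kappa(x)$ is a cone with larger aperture. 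This is obtained by integrating $\nabla h$ along the segment from $y$ to a fixed height inside a slightly larger cone and replacing each value $|\nabla h(w)|$ by its average over $B_w$. Integrating over $x\in\p\Omega$ and using Fubini, the set of $x$ with $w\in\Gamma_\kappa(x)$ has surface measure $\simeq d(w)^{n-1}$. This yields
\[
 \|\mathbb N(h)\|_{L^1(\p\Omega)}\leq C\|h\|_{L^1(\Omega)}+C\|\nabla h\|_{L^1(\Omega)},
\]
which is the estimate in Lemma~\ref{lem:W11-nontangential}. The only delicate point is checking that the segments, and the balls $B_w$ around them, stay inside a fixed larger cone. This follows from the Lipschitz graph geometry once $\kappa$ is large relative to $\alpha_0$ and $M$.
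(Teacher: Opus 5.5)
Your final argument is correct, but it takes a different route from the paper. The paper quotes Dahlberg's theorem $\|\mathbb N(h-h(X_*))\|_{L^1(\p\Omega)}\leq C\|S(h)\|_{L^1(\p\Omega)}$ for the $L^2$ area integral. It then controls $\|S(h)\|_{L^1(\p\Omega)}$ by splitting $S(h)(x)$ over the Whitney cubes $Q_i$ that meet $\gamma(x)$, using $\sigma(\Delta_{Q_i})\lesssim\ell(Q_i)^{n-1}$ and $\sup_{Q_i}|\nabla h|\lesssim\fint_{2Q_i}|\nabla h|$. You instead prove directly the pointwise bound $\mathbb N(h)(x)\leq C\|h\|_{L^1(\Omega)}+C\int_{\Gamma_\kappa(x)}|\nabla h(w)|\dist(w,\p\Omega)^{1-n}\,dw$, from the fundamental theorem of calculus along a vertical segment and the mean value inequality for $|\nabla h|$, and then integrate with Fubini.

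The quantity the paper's Whitney computation effectively uses to dominate $S(h)(x)$, namely $\sum_{Q_i\cap\gamma(x)\neq\varnothing}\ell(Q_i)^{1-n}\int_{2Q_i}|\nabla h|$, is essentially your $L^1$ cone integral. So your argument shows that the deep $\mathbb N$--$S$ comparison is not needed here: this cruder functional already dominates $\mathbb N(h)$ by elementary means. Your route is self-contained and uses harmonicity only through the interior bounds $|h(y)|\lesssim\fint_{B_y}|h|$ and $|\nabla h(w)|\lesssim\fint_{B_w}|\nabla h|$. It therefore transfers verbatim to, e.g., solutions of constant-coefficient elliptic systems such as the Lam\'e system. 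The paper's route is shorter given the citation.

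Two remarks on presentation. First, the initial attempt through $\mathcal M G$ is a dead end, as you note, and can simply be dropped. Second, in the cone-integral step you should write out the exchange of integrals. For fixed $z$, the parameters $\tau$ with $z\in B_{w(\tau)}$ lie in an interval of length at most $2\dist(z,\p\Omega)$, and on that set $\dist(w(\tau),\p\Omega)\simeq\dist(z,\p\Omega)$. This is exactly what produces the weight $\dist(z,\p\Omega)^{1-n}$ after averaging over $B_{w(\tau)}$.
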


By Lemma \ref{lem:W11-nontangential} applied to $u$ and $\nabla u$, Proposition \ref{prop:W21-uniqueness} follows from \cite[Theorem~1.1(a)]{MR3034453}. Indeed, $u \in W^{1,1}_\cD$ gives $u = 0$ on $\cD$. By $\nabla u\in W^{1,1}(\Omega)$, the weak formulation implies $\nu\cdot\nabla u=0$. Both of these are in the sense of trace. Now, since the non-tangential limits agree almost everywhere with the Sobolev traces, we obtain that $u$ satisfies the homogeneous boundary conditions in the sense of the non-tangential limit required by \cite[Theorem~1.1(a)]{MR3034453}.

We are left to show Lemma \ref{lem:W11-nontangential}. This is an easy consequence of Dahlberg's Lusin area integral theorem. We include the proof for completeness.
\begin{proof}
 For $x\in\p\Omega$, let
 \[
  S(h)(x):=
  \left(
   \int_{\gamma(x)}|\nabla h(y)|^2
   \dist(y,\p\Omega)^{2-n}\,dy
  \right)^{1/2}.
 \]
 Fix $X_* \in \Omega$.
 By Dahlberg's Lusin area integral theorem
 \cite[Theorem~1]{Dahlberg1980},
 \begin{equation}
  \|\mathbb{N}(h-h(X_*))\|_{L^1(\p\Omega)}
  \leq C\|S(h)\|_{L^1(\p\Omega)}.
  \label{eq:Dahlberg-area}
 \end{equation}
Take a Whitney decomposition $\Omega = \cup_i Q_i$. For each $Q_i$, let $\Delta_{Q_i}:= \{x\in\p\Omega:\gamma(x)\cap Q_i\neq\emptyset\}$. Then $\sigma(\Delta_{Q_i})\leq C\ell(Q_i)^{n-1}$. Then, since $\dist(\cdot,\p\Omega)\simeq\ell(Q_i)$ on $Q_i$,
 \begin{align*}
  \|S(h)\|_{L^1(\p\Omega)}
  &\leq
  \sum_i \sigma(\Delta_{Q_i})
  \left(
   \int_{Q_i}|\nabla h|^2
   \dist(y,\p\Omega)^{2-n}\,dy
  \right)^{1/2}
  \\
  &\leq
  C\sum_i \ell(Q_i)^n\sup_{Q_i}|\nabla h|
  \leq
  C\sum_i\int_{2 Q_i}|\nabla h|\,dy
  \leq C\|\nabla h\|_{L^1(\Omega)}.
 \end{align*}
Finally, the mean-value property gives $|h(X_*)|\leq C\|h\|_{L^1(\Omega)}$. Combining these, the lemma is proved.
\end{proof}

\section{An interpolation lemma and solvability with $L^p$ data}
\label{sec-interpolation}

In this subsection, we upgrade the endpoint estimate with $H^1_{\cD}(\Omega)$ data to optimal $L^p$ estimates.
This is achieved by a real-valuable interpolation argument, originally due to Caffarelli--Peral \cite{CaffarelliPeral1998}. Here we use a version by Shen
\cite[Theorem~3.2]{Shen2007}. 

\begin{lemma}
 \label{lem:real-variable}
 Let $1<s<p<q<\infty$, and let $Q_0 \in \bR^n$ be a cube.  Suppose that
 $F\in L^1(2Q_0)$ is nonnegative and $h\in L^p(2Q_0)$.  Suppose that, for some $\beta < 1 < \alpha$ and every dyadic
 subcube $Q\subset Q_0$ with
 $\ell(Q)\leq \beta \ell(Q_0)$, there are nonnegative
 functions $F_Q$ and $R_Q$ on $2Q$ such that
 \begin{equation}
  F\leq F_Q+R_Q\qquad\text{on }2Q,
  \label{eq:RV-decomposition}
 \end{equation}
 \begin{equation}
  \fint_{2Q}F_Q\,dx
  \leq C_0\left(\fint_{ 2\alpha Q}|h|^s\,dx\right)^{1/s},
  \label{eq:RV-low}
 \end{equation}
 and
 \begin{equation}
  \left(\fint_{2Q}R_Q^q\,dx\right)^{1/q}
  \leq C_1\left\{
      \fint_{2\alpha Q}F\,dx
      +\left(\fint_{2\alpha Q}|h|^s\,dx\right)^{1/s}
  \right\}.
  \label{eq:RV-high}
 \end{equation}
 Then
 \begin{equation}
  \left(\fint_{Q_0}F^p\,dx\right)^{1/p}
  \leq C\left\{
       \fint_{2Q_0}F\,dx
       +\left(\fint_{2Q_0}|h|^p\,dx\right)^{1/p}
  \right\}.
  \label{eq:RV-conclusion}
 \end{equation}
\end{lemma}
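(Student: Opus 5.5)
This is the Caffarelli--Peral/Shen real-variable lemma, and I would prove it by a good-$\lambda$ argument built on the Calder\'on--Zygmund decomposition. The plan is to control the distribution function of the localized maximal function
\[
M(F)(x):=\sup_{Q\ni x,\,Q\subset 2Q_0}\fint_Q F
\]
(restricted to subcubes of $2Q_0$) by that of $M(|h|^s)^{1/s}$. The goal is an estimate of the form
\[
|\{x\in Q_0: M(F)>A\lambda\}|\leq \epsilon\,|\{M(F)>\lambda\}|+|\{M(|h|^s)^{1/s}>\gamma\lambda\}|
\]
for all $\lambda>\lambda_0$, where
\[
\lambda_0:=C\fint_{2Q_0}F.
\]
Here $A$ is a large constant, and $\gamma$ and $\epsilon$ are small, with $\epsilon A^p<1/2$.

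To get this estimate, fix $\lambda>\lambda_0$. I would run a Calder\'on--Zygmund stopping-time argument on dyadic subcubes of $Q_0$ to cover $\{M(F)>A\lambda\}\cap Q_0$ by maximal dyadic cubes $Q$ on which $\fint_{\alpha' Q}F\geq\lambda$ for a suitable dilation. The choice of $\lambda_0$ guarantees that these maximal cubes are small, with $\ell(Q)\leq\beta\ell(Q_0)$, so the hypotheses of the lemma apply to them. Maximality gives the reverse control: the parent cube satisfies $\fint_{2\alpha\widetilde Q}F\leq C\lambda$, and this is what pins $\lambda$ to the local averages.

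On each such $Q$, suppose the set where $M(|h|^s)^{1/s}\leq\gamma\lambda$ meets $Q$. Then $(\fint_{2\alpha Q}|h|^s)^{1/s}\leq C\gamma\lambda$. I would split $F\leq F_Q+R_Q$ on $2Q$ using \eqref{eq:RV-decomposition}. A weak-type $(1,1)$ bound for the maximal function of $F_Q$, together with \eqref{eq:RV-low}, gives
\[
|\{M(F_Q\chi_{2Q})>A\lambda/2\}|\leq C\gamma|Q|/A.
\]
A weak-type $(q,q)$ bound for the maximal function of $R_Q$, together with \eqref{eq:RV-high}, gives
\[
|\{M(R_Q\chi_{2Q})>A\lambda/2\}|\leq C A^{-q}(1+\gamma)^q|Q|.
\]
Localizing $M(F)$ on $Q$ to $M(F\chi_{2Q})$ works for $A$ large, because of the parent-cube bound. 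Summing over the disjoint cubes $Q$, whose total measure is bounded by $|\{M(F)>\lambda\}|$, yields the good-$\lambda$ inequality with
\[
\epsilon=C(A^{-q}+\gamma A^{-1}).
\]

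To conclude, choose $A$ large so that $CA^{p-q}<1/4$; this is possible because $q>p$. Then choose $\gamma$ small so that $C\gamma A^{p-1}<1/4$. Multiply the good-$\lambda$ inequality by $p\lambda^{p-1}$ and integrate over $\lambda>\lambda_0$. The $\epsilon$ term is absorbed into the left-hand side. This absorption is legitimate when $\int_{Q_0}M(F)^p<\infty$; in general I would first truncate, replacing $F$ by $\min(F,N)$ in the distribution estimate or working with finite level sets, and then let $N\to\infty$. The $h$ term is controlled by the $L^{p/s}$-boundedness of $M$, which holds since $p>s>1$, and gives $\fint_{2Q_0}|h|^p$. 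The range $\lambda\leq\lambda_0$ contributes $\lambda_0^p$. Together these yield \eqref{eq:RV-conclusion}.

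I expect the main obstacle to be the bookkeeping of dilations: making sure the stopping cubes are small enough for the hypotheses to apply, and that the localization $M(F)\leq \max(M(F\chi_{2Q}),C\lambda)$ holds on $Q$ with the $2\alpha$-enlargements. This step is where the constants $\beta$, $\alpha$, and $\lambda_0$ have to be tuned against each other.
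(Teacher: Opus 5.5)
Your proposal is correct, and it is essentially the paper's argument written out in full. The paper sets $H=(\mathcal M(|h|^s))^{1/s}$ and cites Shen's Theorem~3.2, whose proof is this same Calder\'on--Zygmund/good-$\lambda$ scheme with $H$ as the comparison function; the $h$-term is then handled by the $L^{p/s}$-boundedness of $\mathcal M$, exactly as you do.

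Two small precisions:
\begin{itemize}
\item The localization $M(F)\le\max\bigl(M(F\chi_{2Q}),C\lambda\bigr)$ on $Q$ needs more than the parent bound. It uses $\fint_{\alpha'Q^*}F<\lambda$ for every dyadic ancestor $Q^*$ of $Q$, which maximality of your non-monotone stopping condition provides, together with $\lambda>\lambda_0$ at the top scale.
\item For the absorption, truncate the level: integrate over $\lambda<N$, or use $\min(M(F),N)$. Replacing $F$ by $\min(F,N)$ does not work, because \eqref{eq:RV-high} has $F$ itself, not its truncation, on the right-hand side, so the truncated function need not satisfy the hypotheses.
\end{itemize}
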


Indeed, setting $H: = (\mathcal M (|h|^s))^{1/s}$, where $\mathcal{M}$ is the Hardy-Littlewood maximal operator restricted to $2Q_0$, the lemma follows directly from \cite[Theorem~3.2]{Shen2007}.

In this section, we mainly focus on $C^{1,\alpha}$ domains. In Section \ref{sec-260922-0511}, we discuss how to adapt the proof for the case of Lipschitz polyhedrons.

\subsection{Reverse H\"older estimates}\label{sec-reverseholder}

We first give a optimal reverse H\"older estimate.
\begin{lemma} \label{lem-rev-holder}
    Let $s\in (1,4)$ and $\Omega$ be a $C^{1,\alpha}$ domain, $\alpha > 0$. 
    There exist $\theta_0>0$ and $r_0>0$, such that if $\Gamma$ is $\theta_0$-Reifenberg flat, then every solution $v \in W^{1,2}_{\cD}(2Q \cap \Omega)$ of
    \begin{equation} \label{eqn-260828-0335}
    -\Delta v = 0\,\,\text{in}\,\,2 Q \cap \Omega,
    \quad
    v = 0 \,\,\text{on}\,\, 2 Q \cap \cD,
    \quad
    \p v/\p \vec{\nu} = 0 \,\,\text{on}\,\, 2 Q \cap \cN
\end{equation}
satisfies
\begin{equation} \label{eqn-260826-1226}
    (\fint_{Q} |\nabla v|^s \chi_\Omega)^{1/s}
    \leq
    C \fint_{2Q} |\nabla v| \chi_\Omega,
    \quad \forall s\in (1,4)
\end{equation}
for cubes $Q$ centered in $\overline\Omega$ with $\ell(Q)<r_0$.
\end{lemma}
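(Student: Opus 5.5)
We split the claim into two parts. The first is a local $W^{1,s}$ estimate with $L^2$ gradient on the right. The second is a self-improvement from $L^2$ down to $L^1$ on the right-hand side.

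\textbf{First part.} Fix $s\in(1,4)$; it suffices to take $s\in(2,4)$, since smaller $s$ follow by H\"older. We aim for
\begin{equation*}
 \Bigl(\fint_{Q}|\nabla v|^s\chi_\Omega\Bigr)^{1/s}\leq C\Bigl(\fint_{\frac32 Q}|\nabla v|^2\chi_\Omega\Bigr)^{1/2}.
\end{equation*}
This is the optimal local $W^{1,s}$, $s<4$, regularity for homogeneous mixed problems on $C^{1,\alpha}$ (even $C^1$) domains with $\theta$-Reifenberg flat interface, as in \cite{MR4232502,MR4261267}. We simply invoke it.

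If a more self-contained route is wanted, the mechanism is as follows. At small scales near a point of $\Gamma$, flatten the boundary. The flatness of $\Gamma$ and $\partial\Omega$ at every scale lets one compare $v$ with the solution of the model problem in the half-space, with Dirichlet data on $\{x_2>0\}$ and Neumann data on $\{x_2<0\}$. The model solution has gradient in $L^\infty$ times $\delta_\Gamma^{-1/2}$, hence lies in $L^{4-}_{loc}$ uniformly. The error is controlled by $\theta_0$ and the $C^{1,\alpha}$ modulus.

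A Caffarelli--Peral / Shen type argument (Lemma \ref{lem:real-variable}) then yields the $L^s$ bound for every $s<4$, provided $\theta_0=\theta_0(s)$ and $r_0$ are small. Away from $\Gamma$, the estimate is the classical pure Dirichlet or pure Neumann $W^{1,\infty}$ (or $W^{1,s}$) boundary estimate on $C^{1,\alpha}$ domains. Cubes meeting both regimes are handled by covering.

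\textbf{Second part.} Here we pass from the $L^2$ average on the right to the $L^1$ average. This is the standard Giaquinta--Modica self-improvement. From the first part with $s=2$ replaced by an exponent pair, we have, uniformly over all subcubes $Q'$ with $2Q'\subset 2Q$ centered in $\overline\Omega$ (boundary or interior),
\begin{equation*}
 \Bigl(\fint_{Q'}|\nabla v|^2\chi_\Omega\Bigr)^{1/2}\leq C\Bigl(\fint_{\frac32 Q'}|\nabla v|^{2}\chi_\Omega\Bigr)^{1/2}.
\end{equation*}
More usefully, we also have the Caccioppoli--Sobolev--Poincar\'e version with $L^{2_*}$ on the right. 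This uses Poincar\'e with zero trace on a corkscrew portion of $\cD$ (Assumption \ref{asp-cork}, implied by small $\theta_0$) near $\cD$, and subtraction of averages otherwise.

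The well-known lemma (e.g.\ Giaquinta's book, or iterating over a family of nested cubes with an absorption argument, valid since $|\Omega\cap Q|\simeq\ell(Q)^n$ by the Lipschitz property) then lowers the exponent on the right to any $t>0$, in particular $t=1$. Combining the two parts, and using a covering to pass from $\frac32Q$ and $2Q$, gives \eqref{eqn-260826-1226}.

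\textbf{Main obstacle.} The main obstacle is the first step: the sharp range up to $4$ near $\Gamma$. The threshold $\theta_0$ must depend on $s$, and it degenerates as $s\to4$. This is consistent with the statement, where $\theta_0$ may depend on $s$. I would rely on the cited results rather than reprove the perturbation argument.
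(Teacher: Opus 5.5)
Your proposal is correct and takes the same route as the paper, which simply derives the lemma from the optimal $W^{1,s}$ ($s<4$) estimate of \cite[Corollary~4.2]{MR4232502} for locally flat domains with locally flat interfaces. Your second step, lowering the right-hand exponent from $2$ to $1$ by the standard self-improvement of reverse H\"older inequalities, only makes explicit what the paper's ``follows from'' leaves implicit; note that the first display in that step is vacuous as written, and the input the self-improvement lemma actually needs is the $(s,2)$ inequality from your first part, holding uniformly on all admissible subcubes, which you already have.
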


This follows from the optimal $W^{1,p}$ estimate in \cite[Corollary~4.2]{MR4232502}, which, more generally, proves for locally flat domains with locally flat interfaces.

\medskip

Using this, we further prove an optimal weighted reverse H\"older estimate.

\begin{lemma}
 \label{lem:weighted-reverse-Holder}
 Let $1<q<4/3$.  After decreasing the flatness constant depending on
$q$, $v$ satisfies
\begin{equation}
 \left(\fint_{Q}
  \left(\delta^{-1}|\nabla v|\chi_\Omega\right)^q dx
 \right)^{1/q}
 \leq C\fint_{2Q}
  \delta_\Gamma^{-1}|\nabla v|\chi_\Omega\,dx.
 \label{eq:weighted-reverse-Holder}
\end{equation}
\end{lemma}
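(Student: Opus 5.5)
Apply H\"older with a suitable exponent split to the left-hand side of \eqref{eq:weighted-reverse-Holder}, use Lemma~\ref{lem-rev-holder} for the gradient factor and Lemma~\ref{lem-codim2-vol} for the weight factor, and then bound the resulting unweighted average of $|\nabla v|$ by the weighted one using the trivial bound $\delta_\Gamma\lesssim \ell(Q)$ on $2Q$ when $Q$ is close to $\Gamma$. (Here $\delta=\delta_\Gamma$.)

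\textbf{Step 1.} Given $1<q<4/3$, pick $s\in(q,4)$ and $\beta$ with $\frac1q=\frac1s+\frac1\beta$. Since $q<4/3$, we may take $s$ close to $4$ so that $\beta<2-\epsilon$ for some small $\epsilon>0$ depending on $q$. Decrease $\theta_0$ so that Lemma~\ref{lem-codim2-vol} holds with this $\epsilon$. H\"older's inequality then gives
\[
\Bigl(\fint_Q(\delta_\Gamma^{-1}|\nabla v|\chi_\Omega)^q\Bigr)^{1/q}
\le\Bigl(\fint_Q\delta_\Gamma^{-\beta}\Bigr)^{1/\beta}\Bigl(\fint_Q|\nabla v|^s\chi_\Omega\Bigr)^{1/s}.
\]
Lemma~\ref{lem-codim2-vol} bounds the first factor by $C\ell(Q)^{-1}$. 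Lemma~\ref{lem-rev-holder} bounds the second by $C\fint_{2Q}|\nabla v|\chi_\Omega$. Hence the left-hand side is at most $C\ell(Q)^{-1}\fint_{2Q}|\nabla v|\chi_\Omega$. The same argument works if $Q$ is not centered in $\overline\Omega$, since Lemma~\ref{lem-codim2-vol} applies on a concentric ball of comparable size.

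\textbf{Step 2.} It remains to compare $\ell(Q)^{-1}\fint_{2Q}|\nabla v|\chi_\Omega$ with $\fint_{2Q}\delta_\Gamma^{-1}|\nabla v|\chi_\Omega$.
\begin{itemize}
\item[(i)] If $\dist(Q,\Gamma)\le C_*\ell(Q)$ for a fixed constant $C_*$, then $\delta_\Gamma\le C\ell(Q)$ on $2Q$. Thus $\ell(Q)^{-1}\le C\delta_\Gamma^{-1}$ pointwise on $2Q$, and the claim follows immediately.
\item[(ii)] If $\dist(Q,\Gamma)>C_*\ell(Q)$, then $\delta_\Gamma\simeq d:=\dist(Q,\Gamma)$ on $2Q$, so both sides carry essentially the constant weight $d^{-1}$. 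In this case I would not go through Step~1 at all. The estimate reduces to the unweighted reverse H\"older inequality of Lemma~\ref{lem-rev-holder} (or the pure Dirichlet/Neumann estimate, since $2Q$ then meets only one boundary type), multiplied by $d^{-1}$ on both sides.
\end{itemize}

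\textbf{Main obstacle.} The delicate point is the exponent bookkeeping in Step~1. We need $1/s+1/\beta=1/q$ with $s<4$ and $\beta<2$, which is possible exactly when $q<4/3$. This is what forces the flatness constant to depend on $q$ through the choice of $\epsilon$. A secondary technical issue is ensuring that Lemma~\ref{lem-codim2-vol} is applied at scales below $r_0$, and that the cube $2Q$ stays within the chart where Lemma~\ref{lem-rev-holder} is valid.
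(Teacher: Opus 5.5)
Your proposal is correct and is essentially the paper's proof. It uses the same H\"older split with $s$ close to $4$ and $\beta=qs/(s-q)<2$ (possible exactly because $q<4/3$), Lemma~\ref{lem-codim2-vol} for the weight and Lemma~\ref{lem-rev-holder} for the gradient, then $\ell(Q)^{-1}\lesssim\delta_\Gamma^{-1}$ on $2Q$ when $Q$ is near $\Gamma$, and the unweighted reverse H\"older inequality when it is far. The differences are cosmetic: you run the H\"older step before splitting into cases, and in the far case your justification that $\delta_\Gamma\simeq d$ on $2Q$ is slightly more precise than the paper's remark that $\delta\geq c\ell(Q)$ there.
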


\begin{proof}
We first prove \eqref{eq:weighted-reverse-Holder} when $\dist(2Q,\Gamma) \leq C \ell(Q)$ for some large $C>1$.  In this case, $\delta_\Gamma\leq C\ell(Q)$ on $2Q\cap\Omega$. Choose $2<r<4$ sufficiently close to $4$ so that
\begin{equation}
 \beta:=\frac{qr}{r-q}<2.
 \label{eq:weighted-beta}
\end{equation}
Next choose $0<\epsi <2-\beta$ and decrease the flatness constant so that
Lemma \ref{lem-codim2-vol} applies.

By H\"older's inequality,
\begin{equation} \label{eqn-260826-1216}
 \left(\fint_{Q}
  \left( \delta^{-1} |\nabla v|\chi_\Omega\right)^q dx
 \right)^{1/q}
 \leq
 \left(\fint_{Q}|\nabla v|^r\chi_\Omega\,dx\right)^{1/r}
 \left(\fint_{Q}\delta^{-\beta}\chi_\Omega\,dx\right)^{1/\beta}.
\end{equation}

From Lemma \ref{lem-rev-holder} and Lemma \ref{lem-codim2-vol},
\begin{align*}
    \text{RHS of}\,\,\eqref{eqn-260826-1216}
     \leq C\ell(Q)^{-1}
       \fint_{2Q}|\nabla v|\chi_\Omega\,dx
 \leq C\fint_{2Q}
       \delta^{-1}|\nabla v|\chi_\Omega\,dx.
\end{align*}
When $\dist(2Q,\Gamma)> C \ell(Q)$, the desired estimate follows directly from Lemma \ref{lem-rev-holder} with $s=q$, since $\delta \geq c \ell(Q)$ on $2Q$ for some $c>0$. 
\end{proof}

\subsection{Proof of Theorem \ref{thm:poisson-smooth}}

Choose
\begin{equation}
       1<s<p<q< 4/3.
       \label{eq:interpolation-exponents}
\end{equation}
Decrease the flatness constant so that Lemma~\ref{lem:weighted-reverse-Holder} is available with exponent $q$. 
By approximation, we may assume that $f\in L^2(\Omega)$, from which an energy solution $u \in W^{1,2}_\cD(\Omega)$ is available. 
Since $f\in H^1_\cD(\Omega)$, we have $u \in W^{2,1}(\Omega) \cap W^{1,1}_\cD(\Omega)$ and
\[
 F:=\delta^{-1}|\nabla u|\chi_\Omega
 \in L^1(\bR^n).
\]

Fix a sufficiently small cube $Q$ in a coordinate patch, $Q\cap \overline \Omega \neq \emptyset$. We first construct the localized datum used to define $w_Q$. 
Let $B_Q$ be a ball containing $8Q$, with radius comparable to $\ell (Q)$.

If $2B_Q\cap\cD\neq\emptyset$, no cancellation is required for an atom associated with $B_Q$. We set
\[
 g_Q:=f\chi_{4Q\cap\Omega}.
\]

If $2B_Q\cap\cD=\emptyset$, $\int_\Omega g_Q=0$ is needed. Let
$E_Q := (8Q\setminus 4Q)\cap\Omega$. Then, $|E_Q|\simeq|Q|$. Set
\[
 g_Q:=f\chi_{4Q\cap\Omega}
 -|E_Q|^{-1} \int_{4Q\cap\Omega}f\,dx
  \chi_{E_Q}.
\]

By H\"older's inequality, $\|g_Q\|_{L^s(\Omega)} \leq C\|f\|_{L^s(8Q\cap\Omega)}$.
After normalization, $g_Q$ is an adapted $(1,s)_\cD$-atom. Lemma
\ref{lem:atom-decomposition} yields
\begin{equation}
 \|g_Q\|_{H^1_\cD(\Omega)}
 \leq C|Q|
 \left(\fint_{8Q}|f\chi_\Omega|^s\,dx\right)^{1/s}.
 \label{eq:localized-datum-bound}
\end{equation}

Let $w_Q$ be the solution with datum $g_Q$, obtained from case~(a), and let
\[
 v_Q:=u-w_Q,
 \qquad
 F_Q:= \delta^{-1}|\nabla w_Q|\chi_\Omega,
 \qquad
 R_Q:=\delta^{-1}|\nabla v_Q|\chi_\Omega.
\]
Clearly, 
\begin{equation}
       F\leq F_Q+R_Q.
       \label{eq:weighted-decomposition}
\end{equation}

Proposition \ref{prop:atom-data} and \eqref{eq:localized-datum-bound} give
\begin{equation}
\int_\Omega F_Q\,dx
 \leq \|g_Q\|_{H_{\cD}^1(\Omega)}
 \leq C|Q| \left(\fint_{8Q}|f\chi_\Omega|^s\,dx\right)^{1/s}.
 \label{eq:interpolation-low}
\end{equation}

Since $g_Q=f$ in $4Q\cap\Omega$, $v_Q$ satisfies a homogeneous problem here.
By Lemma~\ref{lem:weighted-reverse-Holder},
\begin{align*}
 \left(\fint_{2Q}R_Q^q\,dx\right)^{1/q}
 \leq C\fint_{4Q}R_Q\,dx.
\end{align*}
Noting $R_Q \leq F + F_Q$, from this and \eqref{eq:interpolation-low}, we obtain
\begin{equation}
 \left(\fint_{2Q}R_Q^q\,dx\right)^{1/q}
 \leq C\left\{
      \fint_{4 Q}F\,dx
      +\left(\fint_{8 Q}|f\chi_\Omega|^s\,dx\right)^{1/s}
 \right\}.
 \label{eq:interpolation-high}
\end{equation}

Equations \eqref{eq:weighted-decomposition},
\eqref{eq:interpolation-low}, and \eqref{eq:interpolation-high} verify
the hypotheses of Lemma~\ref{lem:real-variable}.  Applying the same lemma in
each coordinate patch and then using a finite covering of
$\overline\Omega$, we obtain
\begin{equation}
 \left\| \delta^{-1} \nabla u\right\|_{L^p(\Omega)}
 \leq C\left(
     \left\|\delta^{-1}\nabla u\right\|_{L^1(\Omega)}
     +\|f\|_{L^p(\Omega)}
 \right).
 \label{eq:weighted-Lp-intermediate}
\end{equation}
Finally, the embedding $L^p(\Omega) \hookrightarrow H^1_\cD(\Omega)$ and the endpoint estimate in case (a) show that the first term on the right is bounded
by $C_p\|f\|_{L^p(\Omega)}$. 
Combining this with Proposition~\ref{prop:weighted-Whitney} proves
the desired estimate.

\subsection{Proof of Theorem \ref{thm:poisson-polyhedron}}\label{sec-260922-0511}
The proof follows the same strategy. Here we only give a sketch.

From the endpoint estimate and solvability in Hardy spaces in Section \ref{sec-hardy}, we apply the interpolation. The only difference is, instead of a $W^{1,4-\epsilon}$ estimate for solution $v$ to homogeneous problems \eqref{eqn-260828-0335}, which requires the domain to be locally flat, here we apply a $W^{1,s}$ estimate for $s$ slightly bigger than $2$, which works for any Lipschitz domain with $(\cD, \cN)$ satisfying a corkscrew condition as in Lemma \ref{lem-260819-1105}. 
More precisely, there exists some $\epsi_0$ depending only on the Lipschitz character of $\Omega$ and the constant in the corkscrew condition, such that if $v \in W^{1,2}_{\cD} (2Q\cap \Omega)$ satisfies a homogeneous mixed problem, then $v \in W^{1,s}_{\cD} (2Q\cap \Omega)$ for all $s < 2+\epsi_0$, with
\begin{equation*}
    (\fint_{Q} |\nabla v|^s \chi_{\Omega})^{1/s}
    \leq
    C \fint_{2Q} |\nabla v| \chi_{\Omega}.
\end{equation*}

This in turn leads to the weighted estimate in Lemma \ref{lem:weighted-reverse-Holder} with $1<q<1 + \epsi_0/(4+\epsi_0)$. Indeed, noting that we have $\delta^{-\beta} \in L^1_{loc}$ for any $\beta < 2$ on a Lipschitz polyhedron, the restriction \eqref{eq:weighted-beta} leads to $q^{-1} > 1/2 + 1/(2+\epsi_0)$, i.e., $q < 1 + \epsi_0/(4+\epsi_0)$.
From this, the interpolation argument leads to the desired estimate with $p < q < 1 + \epsi_0 (4+\epsi_0)^{-1}$, and the solvability follows.

\section{Discussion on Lam\'e systems}
\label{sec:lame}

The weak solvability and regularity theory for Lam\'e systems have long been studied; see, for example, \cite[Section~6.3]{MR936420} and the references therein. For pure Dirichlet or pure traction (conormal) problems, if domains, coefficients, and data are all smooth, corresponding solutions are also smooth by the classical Agmon--Douglis--Nirenberg theory for elliptic systems with complementing boundary conditions in \cite{MR162050}. 

Just like scalar equations, singularities for Lam\'e systems may occur on nonsmooth domains or at interfaces between two types of boundary conditions. In dimensions two and three, the spectral analysis and singular behaviors of solutions to Lam\'e and general elliptic systems near corners, edges, and vertices have been developed extensively; see, for instance, \cite{MR996909,MR1006837,MR1205404,MR1147281}.
Along this direction, regularity or well-posedness in weighted Sobolev spaces on polyhedral domains have been developed under suitable assumptions; see \cite{MR2564468} and the references therein. Let us also mention the work by Ott and Brown \cite{MR3040944} on general planar Lipschitz domains, in which solvability with control of the non-tangential maximal function of the gradient were obtained.

In this section, we explain how the preceding argument applies to the Lam\'e systems. 
Let $\Omega\subset\bR^n$ be a bounded Lipschitz polygon with a facewise decomposition $(\cD,\cN)$, where $\cD\neq\emptyset$. For a vector field $u:\Omega\to\bR^n$, set
\[
 \epsi(u):=\frac12\bigl(\nabla u+(\nabla u)^\top\bigr),
 \qquad
 \sigma(u):=2\mu\varepsilon(u)+\lambda\operatorname{div}(u)I_n,
\]
where $I_n$ is the identity matrix and $\lambda, \mu$ are parameters satisfying
\begin{equation} \label{eqn-lame-coeff}
    \mu > 0,
    \quad
    \lambda + \frac{2}{n} \mu > 0.
\end{equation}
We consider the mixed displacement-traction problem
\begin{equation}
 \left\{
 \begin{aligned}
  -\operatorname{div}\sigma(u)&=f &&\text{in }\Omega,\\
  u&=0 &&\text{on }\cD,\\
  \sigma(u)\nu&=0 &&\text{on }\cN.
 \end{aligned}
 \right.
 \tag{$\mathrm L$}
 \label{prob:lame}
\end{equation}

A vector field $u\in W^{1,p}_{\cD}(\Omega;\bR^n)$ is a weak solution of
\eqref{prob:lame} if
\begin{equation}
 \int_\Omega \sigma(u):\varepsilon(\varphi)\,dx
 =\int_\Omega f\cdot\varphi\,dx,
 \qquad
 \forall\varphi\in [C^\infty_{\cD}(\Omega)]^2.
 \label{eq:lame-weak}
\end{equation}

With straightforward modifications, our methods for scalar equations also work for Lam\'e systems. The only missing ingredient is the boundary H\"older estimate. At dimension two ($n=2$), this can be resolved by a $W^{1,2+\epsi}$ estimate and the Sobolev embedding. Discussions for extensions to higher dimensions will be made at the end.

\begin{theorem}
 \label{thm:lame-polygon}
 Let $\Omega\subset\bR^2$ be a bounded and Lipschitz polygon with a facewise decomposition $(\cD,\cN)$, where $\cD\neq\emptyset$.
 There exists $p_1 > 1$ depending on $\Omega$ and the Lam\'e coefficients, such that the following hold.
 \begin{enumerate}
  \item If $f\in H^1_{\cD}(\Omega;\bR^2)$, there exists a unique solution $u\in W^{2,1}(\Omega;\bR^2)\cap W^{1,1}_{\cD}(\Omega;\bR^2)$ of \eqref{prob:lame}, and
  \begin{equation}
   \|D^2u\|_{L^1(\Omega)}
   +\|\delta_\Sigma^{-1}\nabla u\|_{L^1(\Omega)}
   \leq C\|f\|_{H^1_{\cD}(\Omega;\bR^2)}.
   \label{eq:lame-hardy-estimate}
  \end{equation}
  \item If $1<p<p_1$ and $f\in L^p(\Omega;\bR^2)$, there exists a unique
  solution $u\in W^{2,p}(\Omega;\bR^2)\cap W^{1,p}_{\cD}(\Omega;\bR^2)$, and
  \begin{equation}
   \|D^2u\|_{L^p(\Omega)}
   +\|\delta_\Sigma^{-1}\nabla u\|_{L^p(\Omega)}
   \leq C_p\|f\|_{L^p(\Omega;\bR^2)}.
   \label{eq:lame-Lp-estimate}
  \end{equation}
 \end{enumerate}
\end{theorem}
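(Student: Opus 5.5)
We follow the scalar polyhedral argument and check each ingredient for the Lam\'e operator in the plane. First, energy solvability. Since $\cD\neq\emptyset$ and \eqref{eqn-lame-coeff} holds, Korn's inequality on $W^{1,2}_\cD(\Omega;\bR^2)$ and the coercivity of $\sigma(u):\varepsilon(u)\geq c|\varepsilon(u)|^2$ give a unique weak solution $u\in W^{1,2}_\cD$ of \eqref{prob:lame} for $f\in L^{r}$, $r>1$. In two dimensions $L^r\hookrightarrow W^{-1,2}$ for every $r>1$, so there is no dimensional restriction here. The Meyers/Gehring argument of Lemma~\ref{lem-260819-1105} is a Caccioppoli plus Sobolev--Poincar\'e argument and does not use the scalar structure; the corkscrew condition holds trivially for facewise decompositions. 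It gives a $W^{1,2+\epsi_0}$ estimate for the dual problem with divergence data and a local reverse H\"older inequality $(\fint_Q|\nabla v|^s)^{1/s}\le C\fint_{2Q}|\nabla v|$ for $s<2+\epsi_0$, for homogeneous mixed Lam\'e solutions. The Caccioppoli inequality needs Korn's inequality locally: near $\cD$ we use Korn with zero trace on a large boundary portion, and elsewhere we subtract a rigid motion in place of a constant.

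Second, the Whitney reduction, Proposition~\ref{prop:weighted-Whitney}(b), carries over verbatim. Away from $\Sigma$ the boundary is locally flat with a single boundary condition, and the Lam\'e Dirichlet and traction conditions are complementing in the Agmon--Douglis--Nirenberg sense. The half-space $W^{2,p}$ estimates \eqref{eq:local-W2p-away-Sigma} therefore hold, with $\nabla u$ on the right. One point needs care: the traction problem only controls $u$ modulo rigid motions. Since the estimate involves $D^2u$ and $\ell^{-1}|\nabla u|$, we subtract a rigid motion $Ax+b$ with $A$ skew, which does not affect $D^2u$ and only changes the lower-order term by a constant matrix. That constant is again bounded by an average of $|\nabla u|$, so the estimate survives.

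Third, the atom estimate, Proposition~\ref{prop:atom-data}, needs the boundary H\"older estimate used in Step 2. This is the main obstacle in general dimension, but for $n=2$ the local reverse H\"older inequality with exponent $2+\epsi_0>2$ combined with Morrey's embedding gives $[\phi_F]_{C^\gamma(B_r\cap\Omega)}\le Cr^{-\gamma}(\fint_{B_{2r}\cap\Omega}|\nabla\phi_F|^2)^{1/2}\,r$ with $\gamma=1-2/(2+\epsi_0)$, uniformly up to corners. This is exactly the estimate invoked in Step 2.

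The cancellation in Step 2 needs adjusting. When $2B\cap\cD=\emptyset$ we subtract a constant vector $c_B$ from $\phi_F$, which is legitimate because constants are test functions locally away from $\cD$ and the atom has mean zero componentwise. When $2B$ meets $\cD$, the H\"older estimate with vanishing Dirichlet data on a corkscrew portion of size comparable to $R$ gives $|\phi_F|\lesssim R^\gamma[\phi_F]_{C^\gamma}$ directly. Steps 1 and 3 use only the Meyers estimate, H\"older's inequality with $\delta_\Sigma^{-\beta}\in L^1_{loc}$ for $\beta<2$ (true near the vertices of a polygon), and the Whitney reduction, so they go through unchanged.

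Uniqueness in $W^{2,1}$ is the second delicate step. For Proposition~\ref{prop:W21-uniqueness} we would either invoke the planar $L^1$ mixed-problem uniqueness of Ott--Brown \cite{MR3040944} for Lam\'e systems, or avoid non-tangential machinery entirely. In two dimensions $W^{2,1}\hookrightarrow W^{1,2}$, so a $W^{2,1}\cap W^{1,1}_\cD$ solution with $f=0$ is already an energy solution. Korn coercivity then forces $u=0$. This second route is simpler, and it is the one we take.

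Finally, the $L^p$ range follows from Lemma~\ref{lem:real-variable} applied to $F=\delta_\Sigma^{-1}|\nabla u|$, exactly as in Section~\ref{sec-260922-0511}. We localize the datum into adapted $(1,s)$ atoms, using componentwise cancellation. The reverse H\"older exponent $2+\epsi_0$ yields the weighted reverse H\"older inequality for $q<1+\epsi_0/(4+\epsi_0)$. This gives $p_1=1+\epsi_0/(4+\epsi_0)$, and solvability for $1<p<p_1$ follows by approximation from $L^2$ data combined with the endpoint bound.
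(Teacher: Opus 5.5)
Your proposal is correct and follows the paper's sketch essentially step by step. The shared steps are: Korn coercivity with Caccioppoli/Gehring for a $W^{1,2+\varepsilon_0}$ reverse H\"older bound; the Whitney reduction via Agmon--Douglis--Nirenberg half-space estimates away from $\Sigma$; the two-dimensional Morrey embedding to supply the boundary H\"older estimate in Step~2 of the atom argument; uniqueness through $W^{2,1}\hookrightarrow W^{1,2}$ and Korn; and interpolation with the weighted reverse H\"older inequality. Your threshold $1+\varepsilon_0/(4+\varepsilon_0)$ is exactly the paper's $p_1=2s/(s+2)$ with $s=2+\varepsilon_0$.
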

Here, the vector-valued atoms and $H^1_\cD(\Omega ; \bR^2)$ are defined componentwise.

\begin{proof}[Sketch of the proof]
To start with, under \eqref{eqn-lame-coeff}, Korn's inequality gives coercivity of the elasticity form on $W^{1,2}_{\cD}(\Omega;\bR^2)$ and hence the estimate in energy space $W^{1,2}_\cD$. The Caccioppoli inequality, the Sobolev--Poincar\'e inequality, and Gehring's lemma yield an exponent $s>2$ such that homogeneous mixed solutions satisfy
\begin{equation}
 \left(\fint_{B_r(x)\cap\Omega}|\nabla v|^s\,dx\right)^{1/s}
 \leq C\left(\fint_{B_{2r}(x)\cap\Omega}|\nabla v|^2\,dx\right)^{1/2}
 \label{eq:lame-meyers}
\end{equation}
on sufficiently small balls where the homogeneous problem holds in $B_{2r}(x)\cap\Omega$; see \cite[Theorem~3.1]{MR3040944}. 

Similarly, the inhomogeneous estimate with $f \in L^p(\Omega;\bR^2)$, $p > 2n/(n+2) = 1$, follows by the same duality argument in Lemma~\ref{lem-260819-1105}. 

The same proof of the key Whitney reduction in Proposition~\ref{prop:weighted-Whitney} still works for Lam\'e systems, noting that away from $\Sigma$, the local $W^{2,q}$ estimates are interior or half-space estimates for pure displacement or traction problems, which can be found in \cite{MR162050}. 
The only additional ingredient in the atom argument is a boundary H\"older estimate. In dimension two, \eqref{eq:lame-meyers} and the Sobolev--Morrey embedding give, with $\gamma=1-2/s>0$,
\begin{equation}
 r^\gamma[v]_{C^\gamma(\overline{\Omega}\cap B_r(x))}
 \leq Cr\left(\fint_{B_{2r}(x)\cap\Omega}|\nabla v|^2\,dx\right)^{1/2}.
 \label{eq:lame-holder}
\end{equation}
Using this estimate for the adjoint solution in Step~2 of the proof of Proposition~\ref{prop:atom-data}, together with the cancellation of the vector-valued atom or the zero Dirichlet trace, gives the same annular decay. The remaining steps of that proof and summation of an atomic decomposition give~\eqref{eq:lame-hardy-estimate} and endpoint existence.

The interpolation argument in Section~\ref{sec-260922-0511} then applies to vector-valued data. Indeed, \eqref{eq:lame-meyers} gives the required weighted reverse H\"older estimate for $1<q<2s/(s+2)$, since $qs/(s-q)<2$. Lemma~\ref{lem:real-variable} and the Whitney reduction therefore yield~\eqref{eq:lame-Lp-estimate} for $1<p<p_1:=2s/(s+2)$.

Finally, $W^{2,1}(\Omega;\bR^2)\hookrightarrow W^{1,2}(\Omega;\bR^2)$. A solution of the homogeneous problem in the endpoint class therefore lies in the energy space. Testing by $u$ and applying Korn's inequality proves uniqueness.
\end{proof}

\subsection*{Discussion for higher dimensions}

When $n\geq 3$, the main missing ingredient for extending the above argument is a local boundary H\"older estimate for homogeneous solutions. 
In dimension three, such estimates are available for the pure Dirichlet and pure traction problems on general Lipschitz domains; see, for instance, \cite{DahlbergKenigVerchota1988,DahlbergKenig1990}. The special role of dimension three can be understood through non-tangential maximal-function estimates; see \cite{Shen2003Morrey,Shen2006LpDirichlet}.
Indeed, the localized pure-boundary theory controls the non-tangential maximal function of $\nabla v$ in $L^s$ for some $s>2$. When $n=3$, this exponent exceeds the boundary dimension $n-1=2$.
For $n\geq4$, such range does not reach an exponent above $n-1$.
In fact, for $n\geq4$, even the weak maximum principle for the Dirichlet Lam\'e problem on an arbitrary Lipschitz domain remains open; see, for example, \cite[Problem~3.2.38]{MR1282720} and \cite{Shen2006LpDirichlet, Zhuge2020WeakMaximum}.

For mixed boundary conditions, the singularity along the interface $\Gamma$ creates an additional difficulty. For instance, the expected optimal range for non-tangential maximal function becomes $s<4/3$, which is below $n-1$ when $n=3$. This leads naturally to the following question:
\begin{center}
    \textit{Does a scale-invariant local boundary H\"older estimate hold for homogeneous Lam\'e systems with mixed Dirichlet--traction boundary conditions on three dimensional Lipschitz domains?}
\end{center}

To the best of our knowledge, such a H\"older estimate is not available even for arbitrary three-dimensional Lipschitz polyhedra. Nicaise \cite{MR1205404} proved the stronger regularity $H^{3/2+\varepsilon}$, and hence H\"older continuity, under additional geometric assumptions. In particular, the interior angle at every Dirichlet--traction edge is required to be less than $\pi$, together with an additional condition at the vertices. Nicaise further conjectured that the latter vertex condition could be removed \cite[Remark~4.4]{MR1205404}.
An affirmative answer, even restricted to Lipschitz polyhedra, would provide the decay estimate needed for the atomic data estimate and the corresponding $W^{2,p}$ solvability for $p$ close to $1$.

\section*{Acknowlegement}
Generative AI tools were used to assist with language editing and improving the presentation of the manuscript. The authors take full responsibility for the content of the manuscript.

\newcommand{\cprime}{\'}
\bibliographystyle{plain}

\bibliography{biblio.bib}

\end{document}